\pgfplotsset{compat=newest}
\tikzset{no slope/.code={\pgfslopedattimefalse}}
\tikzset{
    convexset/.style = {line width = 0.75 pt, fill = lightgray},
    ext/.style = {circle, inner sep=0pt, minimum size=2pt, fill=black},
    segment/.style = {line width = 0.75 pt}
        }
\colorlet{lightgray}{gray!50}
\newcommand\set[2]{\left\{ {#1} \ : \ {#2} \right\}}
\newcommand\R{\mathbb{R}}
\newcommand\Z{\mathbb{Z}}
\newcommand\x{\mathbf{x}}
\newcommand\y{\mathbf{y}}
\newcommand\n{\mathbf{n}}
\renewcommand\int{\mathrm{int}\,}
\newcommand\define{\,:=\,}
\newcommand{\review}[1]{{\color{red}{#1}}}
\newtheorem{thm}{Theorem}
\newtheorem{mainthm}{Theorem}
\newtheorem*{thm*}{Theorem}
\newtheorem{lem}[thm]{Lemma}
\newtheorem{prop}[thm]{Proposition}
\newtheorem{conj}[thm]{Conjecture}
\theoremstyle{definition}
\newtheorem{dfn}{Definition}
\newtheorem{rem}{Remark}
\newcommand\ignore[1]{}
\newcommand{\conv}{\textup{conv}}
\title[A few more Lonely Runners]{A few more Lonely Runners}
\author{Avinash Bhardwaj}
\address{Avinash Bhardwaj,Indian Institute of Technology Bombay, Mumbai, India 400076}
\email{abhardwaj@iitb.ac.in}
\author{Vishnu Narayanan}
\address{Vishnu Narayanan, Indian Institute of Technology Bombay, Mumbai, India 400076}
\email{vishnu@iitb.ac.in}
\author{Hrishikesh Venkataraman}
\address{Hrishikesh Venkataraman, Indian Institute of Science Education and Research, Pune, India 411008}
\email{hrishikesh.v@students.iiserpune.ac.in}
\thanks{}
\begin{document}
\begin{abstract}
\emph{Lonely Runner Conjecture}, proposed by J\"{o}rg M. Wills and so nomenclatured by Luis Goddyn, has been an object of interest since it was first conceived in 1967: Given positive integers $k$ and $n_1,n_2,\ldots,n_k$ there exists a positive real number $t$ such that the distance of $t\cdot n_j$ to the nearest integer is at least $\frac{1}{k+1}$, $\forall~~1\leq j\leq k$. In a recent article, Beck, Hosten and Schymura described the \textit{Lonely Runner polyhedron} and provided a polyhedral approach to identifying families of lonely runner instances. We revisit the \textit{Lonely Runner polyhedron} and highlight some new families of instances satisfying the conjecture. In addition, we relax the sufficiency of existence of an integer point in the \emph{Lonely Runner polyhedron} to prove the conjecture. Specifically, we propose that it suffices to show the existence of a lattice point of certain superlattices of the integer lattice in the \emph{Lonely Runner polyhedron}.
\end{abstract}
\maketitle

\section{Introduction}
The following was conjectured independently by Wills \cite{wills1967zwei} and Cusick \cite{cusick1974view}.
\begin{conj}
Consider $k+1$ pairwise-distinct real numbers $n_0, n_1,\ldots, n_k$. For any $0\leq i\leq k$, there is a real number $t$ such that the distance of $t(n_j-n_i)$ to the nearest integer is at least $\dfrac{1}{k+1}$, for all $0\leq j\leq k$, $j\neq i$.
\end{conj}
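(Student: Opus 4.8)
\emph{Proof strategy.} This is the full Lonely Runner Conjecture, which remains open; what I would actually attempt is the polyhedral program that the rest of the paper develops, with the realistic goal of settling new families of instances rather than the whole statement. The first step is the classical sequence of reductions. By relabeling the runners it suffices to treat one index, say $i=0$, and to pass to the gap vector $v_j:=n_j-n_0$ ($j\neq 0$), so the task becomes: given pairwise-distinct nonzero reals $v_1,\dots,v_k$, find $t>0$ with $\operatorname{dist}(tv_j,\Z)\ge \tfrac1{k+1}$ for all $j$. A compactness argument (for each runner the set of admissible $t\bmod 1$ is closed, and one passes to limits over rational approximations of the $v_j$) reduces to rational speeds, clearing denominators reduces to $v_j\in\Z_{>0}$, and rescaling $t$ reduces to $\gcd(v_1,\dots,v_k)=1$.

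Next I would reformulate loneliness as integer feasibility: a witness exists if and only if there are $t$ and $\mathbf m\in\Z^k$ with $m_j+\tfrac1{k+1}\le t v_j\le m_j+\tfrac{k}{k+1}$ for every $j$. For each admissible $\mathbf m$ these inequalities cut out a (possibly empty) subinterval in $t$; the Lonely Runner polyhedron of Beck, Hosten and Schymura packages all of these at once, so that the conjecture becomes the question of whether a prescribed union of rational polyhedra contains a point, and restricting $t$ to the fundamental domain $[0,1]$ makes this union finite. For a fixed family of speed vectors, nonemptiness of this union is then a finite combinatorial/arithmetic condition, which one can hope to certify via Farkas-type duality or to check directly.

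The concrete plan for producing new families is twofold. First, for families carrying enough divisibility or congruence structure, exhibit an explicit rational witness $t$ of small denominator — for instance a suitable integer multiple of $\tfrac1{k+1}$ — and verify directly that each fractional part $\{t v_j\}$ lands in $[\tfrac1{k+1},\tfrac{k}{k+1}]$. Second, for families where no single $t$ is evident, run a covering argument on $[0,1]$: runner $j$ forbids a ``bad set'' of Lebesgue measure $\tfrac{2}{k+1}$, and one must show the $k$ bad sets do not cover $[0,1]$. The main obstacle is precisely that the naive union bound only yields total bad measure $\tfrac{2k}{k+1}$, which exceeds $1$ for every $k\ge 1$, so all of the work lies in using the arithmetic of the specific family to force the bad intervals to overlap, or to align an explicit good $t$ with a shared structure of the $v_j$. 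Making any such argument succeed uniformly over all speed vectors is exactly what is open; the attainable target here is to enlarge the catalogue of structured families for which the overlap analysis or the explicit-witness construction can be pushed through, which is what the Lonely Runner polyhedron is designed to systematize.
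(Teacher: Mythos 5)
The statement you were given is the full Lonely Runner Conjecture, which the paper records as a conjecture and does not prove (it remains open for $k\ge 7$). There is therefore no proof in the paper to compare against, and you are right not to pretend to supply one. Your sketch — reduce to one stationary runner and a gap vector, pass to positive integer speeds, recast loneliness as integer feasibility of the Beck--Hosten--Schymura polyhedron, then settle structured families either by exhibiting an explicit rational witness $t$ or by forcing the ``bad'' intervals to overlap — is exactly the program the paper then carries out for its Theorems \ref{thm:1}, \ref{thm:2}, \ref{thm:6}, and \ref{thm:7}, so you have identified the right framework.

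One caution on a step you describe as routine: the reduction from real to rational and then to positive integer speeds is genuinely delicate and was historically a gap in the literature; the paper cites Henze and Malikiosis \cite{henze2017covering} precisely because theirs was the first unconditional proof that integral speeds suffice. A naive compactness argument (extract a convergent subsequence of witnesses $t^{(n)}\in[0,1]$ for rational approximants and pass to the limit) does produce a point satisfying the closed conditions $\operatorname{dist}(t^*v_j,\Z)\ge\tfrac1{k+1}$, but one must also guard the side constraints of the original formulation (pairwise distinctness of the speeds, and in some variants positivity of $t$) in the limit, which is where earlier attempts went wrong. You should either cite the Henze--Malikiosis reduction outright or spell out why the constraints survive the limit, rather than presenting it as folklore. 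With that caveat, your assessment of the obstruction (the union bound on bad measure gives $\tfrac{2k}{k+1}>1$, so overlap of the forbidden intervals must be exploited) and your two-pronged plan for new families are sound and faithfully reflect what the paper is doing.
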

Goddyn \cite{bienia1998flows} termed the above as the \emph{Lonely Runner Conjecture}, where he considered the real numbers to be the speeds of runners on a unit length circular track, all of whom start at the same time and at the same point on the track. Note that it is the $(n_j-n_i)$ that matters and not the $n_i$ itself. Consequently, the speed of a runner can be subtracted from all the speeds. This leaves a runner stationary and possibly, some runners with negative speeds. However, running with a negative speed is equivalent to running in the opposite direction, with the same magnitude of speed. Combining the stationarity of a runner with the symmetry of the track, all resultant speeds can be assumed to be positive. A similar argument can be made for the time $t$ as well. Thus, an equivalent version of the conjecture is:
\begin{conj} \textup{(Lonely Runner Conjecture)}\label{conj:lonelyrunner2}
Given $k$ positive real numbers $n_1, n_2,\ldots, n_k$, there is a non-negative real number $t$ such that the distance of each $tn_i,~1\leq i \leq k$ to its nearest integer is at least $\dfrac{1}{k+1}$.
\end{conj}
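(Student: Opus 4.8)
The plan must begin with an admission: Conjecture~\ref{conj:lonelyrunner2} is the Lonely Runner Conjecture in full generality, which is open, so no complete proof is on offer. What is realistic — and what the abstract promises — is to certify the statement for structured families of speed vectors $(n_1,\dots,n_k)$ through the polyhedral reformulation, so the proposal below is really a proposal for attacking \emph{instances} systematically.

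First I would make the standard reductions. A density-plus-compactness argument lets one replace arbitrary real speeds by rational ones, and since the gap $\tfrac{1}{k+1}$ is demanded non-strictly one can clear denominators and assume the $n_i$ are positive integers, pairwise distinct, with $\gcd(n_1,\dots,n_k)=1$. The problem then lives on the circle $\mathbb{R}/\mathbb{Z}$: find $t\in[0,1)$ with the fractional part $\{tn_i\}\in[\tfrac{1}{k+1},\tfrac{k}{k+1}]$ for every $i$. Equivalently one seeks $t$ for which $t(n_1,\dots,n_k)$, taken modulo $\mathbb{Z}^k$, avoids the open set of points having some coordinate within $\tfrac{1}{k+1}$ of an integer — Cusick's view-obstruction picture.

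Next I would deploy the Lonely Runner polyhedron of Beck--Hosten--Schymura. Fix a combinatorial type, i.e.\ for each runner $i$ a choice of which of the $k$ admissible windows $[\tfrac{j}{k+1},\tfrac{j+1}{k+1}]$, $1\le j\le k$, the point $tn_i$ should land in; the inequalities $\tfrac{j_i}{k+1}\le t n_i-m_i\le\tfrac{j_i+1}{k+1}$ in the variables $t$ and $m_i$ cut out a rational polyhedron, and the instance obeys the conjecture precisely when one of these finitely many polyhedra contains a point whose $m_i$-coordinates are all integers. I would then hunt for families of $(n_i)$ and a well-chosen type for which this succeeds by a transparent mechanism: either the polyhedron is integral (so any vertex works), or the set of feasible $t$ is a union of intervals long enough to be forced to contain a $t$ with all $m_i\in\mathbb{Z}$, or a counting/covering argument shows the $k$ excluded windows cannot jointly cover $[0,1)$. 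Speeds in arithmetic progression, speeds sharing common-divisor structure, and speeds that are ``spread out'' are the natural first candidates.

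The step I expect to be the real obstruction is the one that keeps the full conjecture open: there are on the order of $k^k$ types, and for a generic integer speed vector every single type yields either an empty polyhedron or one with no lattice point, so no per-type argument can close the gap without a mechanism coordinating all $k$ runners at once. Consequently the honest target is not Conjecture~\ref{conj:lonelyrunner2} itself but a clean characterization of the arithmetic conditions on $(n_1,\dots,n_k)$ under which a single type carries the required lattice point — and extracting the widest such conditions, together with the resulting new families, is where the work of the paper should concentrate.
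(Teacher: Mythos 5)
You are right that Conjecture~\ref{conj:lonelyrunner2} is the full Lonely Runner Conjecture, which the paper does not — and cannot — prove; the paper merely restates it, cites the known cases $k\le 6$, and then certifies various structured families of speed vectors. Recognizing this and redirecting toward a polyhedral characterization of families is exactly the paper's own program, so there is no gap in your assessment. The one place where your proposed mechanism diverges from the paper's is worth noting: you suggest enumerating combinatorial types, one polyhedron per choice of admissible window $\bigl[\tfrac{j_i}{k+1},\tfrac{j_i+1}{k+1}\bigr]$ for each runner, and testing each for a lattice point, while correctly observing that the $k^k$ blow-up makes a per-type argument hopeless in general. The paper avoids that enumeration entirely by working with a single polyhedron $P(\mathbf{n})$ (the Beck--Hosten--Schymura polyhedron, whose defining inequalities already eliminate $t$) and, crucially, by \emph{projecting} $P(\mathbf{n})$ onto the first one or two coordinates. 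In the resulting planar region $Q = P_2(\mathbf{n})$ it uses width bounds along $\mathbf{e}_1$ and $\mathbf{e}_2$, a translate-and-union construction $Q_4 = Q_1 \cup (Q_2 - \mathbf{e}_2)$, and the elementary convexity lemma (\Cref{lem:convexity}) to force an integer point, which then lifts to $P(\mathbf{n})$ because the dropped coordinates can be set to zero when $n_3 \le kn_k$. Both routes rest on the same reformulation, but the projection-and-width strategy is what turns explicit arithmetic conditions on the speeds — such as $n_2\bigl(\tfrac{k}{n_3}-\tfrac{1}{n_k}\bigr)\ge k+1$ or $n_1 \bmod \bigl((k+1)n_k\bigr) \in [n_k, kn_k]$ — into lonely-runner certificates without ever touching the combinatorial explosion you flag.
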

\ignore{Since we say $t$ is a real number, one could question as to what negative time means. Motion under negative time can be thought of as motion with all speeds negated (i.e, all runners run in the opposite direction, with the same magnitude of speed as earlier).}
In the remainder of this paper, we refer to the \emph{Lonely Runner Conjecture} in the form described in \Cref{conj:lonelyrunner2}. Furthermore, we refer to any $\n=(n_1,n_2,\ldots,n_k)$ as a \emph{lonely runner instance} if $\n$ satisfies the conjecture.

With minimal effort, the conjecture can be proven to be true for $k=1$. The $k=2$ case was proven in the context of Diophantine approximation by Wills \cite{wills1967zwei}. Using a similar approach, the $k=3$ case was proven, first by Betke and Wills \cite{betke1972untere}, and then by Cusick \cite{cusick1974view}. The first proof for $k=4$ was given using a view-obstruction problem approach, by Cusick and Pomerance \cite{cusick1984view}. A simpler proof was then given by Bienia et al. \cite{bienia1998flows} by relating the conjecture to nowhere zero flows in regular matroids. Thinking of the conjecture as a covering problem, the $k=5$ case was proven by Bohman, Holzman and Kleitman \cite{bohman2001six}. Later, Renault \cite{renault2004view} provided a simpler proof for this, using a case-by-case analysis of all possible speed vectors. Subsequently, a study of the regular chromatic number of distance graphs, by Barajas and Serra \cite{barajas2008lonely}, led to a proof for $k=6$. The conjecture remains open for $k\geq 7$.

Henze and Malikiosis \cite{henze2017covering} were among the first ones to study the conjecture from a polyhedral theory perspective. They made use of the equivalence of - (i) well-known geometric problems, such as the motion of billiards balls in a cube avoiding an inner cube, and (ii) determining the covering radii of lattice zonotopes. \ignore{Consider the path traversed by a billiards ball constrained to move inside a unit cube. The authors first gave the relation between - (a) minimum size of an inner cube placed inside the unit cube such that the path of any billiards ball intersects the inner cube, and (b) covering radius of a corresponding zonotope. \review{Using this, they provide a reformulation of the conjecture in terms of the maximum dilation of a zonotope such that, for a new lattice $\Lambda$, the zonotope contains a nontrivial lattice point $p\in\Lambda$, $p\notin\mathbb{Z}^k$.}} They established, as the first unconditional proof, that it suffices to prove the conjecture for positive integral speeds.

An alternative approach to fixing $k$ and attacking the problem is to identify families of speed vectors that are \emph{lonely runner instances}. Such an endeavor was taken by Beck, Hosten and Schymura \cite{polyhedra2019matthias}. The authors introduced the \emph{Lonely Runner polyhedron} and provided families of \emph{lonely runner instances}, one in terms of the speeds of the fastest and slowest runners, and another in terms of the parities of the runners' speeds. Subsequently, by considering the projection of the polyhedron on to the first one and two coordinates, they provide families of \emph{lonely runner instances} by relating the speeds of the second and third fastest runners with the slowest.

Very recently, Rifford \cite{rifford2022time} employed a new method to approach the conjecture. In particular, he explored the problem of determining an upper bound on $t$, in terms of the number of rounds covered by the slowest runner, and described this in terms of a covering problem. A conjecture was proposed for the covering problem and it was proven to be true for all $2 \leq k \leq 5$.

One related quantity, to \Cref{conj:lonelyrunner2}, that has been well studied in the past is the \emph{gap of loneliness}. Let $d(x,\mathbb{Z})$ represent the distance of a real number $x$ to the nearest integer.\ignore{Given $\n \in \Z^k$, define
\begin{equation*}
\Gamma(\n)=\sup\limits_{t\in\mathbb{R}} \min\limits_{i\in[k]} d(tn_i,\mathbb{Z})
\end{equation*}}
The \emph{gap of loneliness} is then defined as 
$$
\Gamma(k)=\inf_{\n\in\mathbb{Z}^k}\sup\limits_{t\in\mathbb{R}} \min\limits_{i\in[k]} d(tn_i,\mathbb{Z})
$$
An equivalent version of the \emph{Lonely Runner Conjecture} states that $\Gamma(k)=\dfrac{1}{k+1}$. Dirichlet approximation theorem establishes that $\Gamma(k)\leq\dfrac{1}{k+1}$. Several efforts have been made to prove the converse in \cite{chen1994view}, \cite{chen1999view}, \cite{perarnau2016correlation},\cite{rifford2022time} and \cite{tao2018some}, among others. \ignore{The present benchmark is $\frac{1}{2k-1+\frac{1}{2k-3}}$ for all $k$ and $\left(\frac{1}{2k}+o\left(\frac{\log k}{k^2(\log\log k)^2}\right)\right)$ for large $k$.} Using this version of the conjecture, Tao \cite{tao2018some} proved that it suffices to show that the \emph{Lonely Runner Conjecture} holds for $\n$ having all components at most $k^{O(k^2)}$. In an attempt to prove the same, Tao proved that if all the components of a speed vector $\n$ are at most $1.2k$, then $\n$ is a \emph{lonely runner instance}. Furthermore, it was noted that a desired goal is to increase the multiplier $1.2$ to $2$. Recently, Bohman and Peng \cite{bohman2022coprime} proved the approximate version of this result when they showed that it is possible to get the multiplier arbitrarily close to $2$ when the number of runners is sufficiently large.

Assuming that $n_1\geq n_2\geq \ldots\geq n_k$, every $\mathbf{n}$ is associated with a \emph{lacunarity} value. To prove the \emph{Lonely Runner Conjecture}, it suffices to prove that every $1$-lacunary integer sequence is a \emph{lonely runner instance}.  In one of the first results with this perspective, Pandey\cite{pandey2009note} proved that $\frac{2(k+1)}{k-1}$-lacunary integer sequences are \emph{lonely runner instances}. Barajas and Serra\cite{barajas2009chromatic} improved the result when they showed that all $2$-lacunary integer sequences are \emph{lonely runner instances}. Dubickas\cite{dubickas2011lonely} established that for very large values of $k$, $\left(1+\frac{33\log(k)}{k}\right)$-lacunary integer sequences are \emph{lonely runner instances}. Subsequently, Czerwi\'{n}ski\cite{czerwinski2018lonely} showed that $\n$ is a \emph{lonely runner instance} if $(\n\setminus\{n_1\})$ is $\frac{k+1}{8e}$-lacunary. Recently, it was proven that $\n$ is a \emph{lonely runner instance} if $(\n\setminus\{n_1,n_k\})$ is $\frac{2k}{k-1}$-lacunary and $GCD(n_{k-1},n_k)\leq \frac{k-1}{k+1}(n_{k-1}-n_k)$ \cite{polyhedra2019matthias}.
 
For a long time, there have been thoughts on whether the \emph{Lonely Runner Conjecture} has been put forth in the most general way possible. As per \cite{polyhedra2019matthias}, Wills recently Conjectured that the runners need not start from a common point, and there would still be a time when all of them are at least $\frac{1}{k+1}$ distance from a fixed origin. This statement, as with the \emph{Lonely Runner Conjecture} is easy to prove for $k=1$. Along with the new statement, a proof was provided for $k=2$ by Beck et al. \cite{polyhedra2019matthias}. Cslovjecsek et al. \cite{cslovjecsek2022computing} studied the covering radius of polytopes and proved that the new statement holds for $k=3$. More recently, Rifford\cite{rifford2022time} defined the view-obstruction version for the new statement and provided new proofs for $k=2,3$.
\ignore{Apart from the various approaches mentioned above, work has been done to study a modification of the conjecture and generalizing results to the version that we study here. The modified version allows the runners to have arbitrary starting points on the track. This has been a subject of interest in, for example, \cite{cslovjecsek2022computing} and \cite{rifford2022time}.}

We summarize the aforementioned results in Table \ref{tab:casesresolved}.
\begin{table}[ht]
\begin{tabular}{|c|c|c|} 
\hline
Property & Resolution criteria & Cases resolved \\
\hline
Number of runners & $k\in\mathbb{N}^+$ & $k\leq 6$\\
\hline
Speeds of runners & $n_1\geq n_2\geq \ldots \geq n_k$ & $n_1\leq kn_k$\\
\hline
\multirow{2}{*}{Lacunarity, $\lambda(\n)$} & \multirow{2}{*}{$\lambda(\n) \geq 1$} & $\lambda(\n) \geq  2$\\
\cline{3-3}
&  & $\lambda(\n) \geq \left(1+\frac{33\log(k)}{k}\right)$ \text{for large $k$}\\
\hline
\multirow{2}{*}{Gap of Loneliness, $\Gamma(k)$} & \multirow{2}{*}{$\Gamma(k) \geq \frac{1}{k+1}$} & $\Gamma(k) \geq \frac{1}{2k-1+\frac{1}{2k-3}}$\\
\cline{3-3}
& & $\Gamma(k) \geq\left(\frac{1}{2k}+o\left(\frac{\log k}{k^2(\log\log k)^2}\right)\right)$ \text{for large $k$}\\
\hline
\end{tabular}
\caption{Summary of the cases resolved to be lonely runner instances}
\label{tab:casesresolved}
\end{table}
\subsection{Preliminaries}\label{sec:preliminaries}
In this work, $\mathbb{N}$, $\mathbb{N}^+$, $\mathbb{Z}$, $\mathbb{R}$ and $\R^+$ represent the set of all non-negative integers, positive integers, integers, real numbers and positive real numbers respectively. Their analogues in $n$-dimensions are $\mathbb{N}^n$, $(\mathbb{N}^+)^n$, $\mathbb{Z}^n$, $\mathbb{R}^n$ and $(\R^+)^n$. Additionally, $\mathbf{e}_k \in \R^n$ denotes the $k$\textsuperscript{th} standard basis vector for $\R^n$. In particular, $\mathbf{e}_k = (\ldots, 0, 1, 0, 0, \ldots)$ where $k$\textsuperscript{th} entry is 1 and the rest are all zeroes. Furthermore, $\mathbf{e}=(1,\ldots, 1)$ represents the vector with all entries equal to $1$. We also denote the index set $\{1,2,\ldots, n\}$ by $[n]$.

Everytime the modulo operation is used, with modulus $m$,  we assume that the equivalence class is $[0,m)$. This is equal to the remainder on division by $m$. With the above, the fractional-part operation, $\{\cdot\}:\R^+\cup \{0\}\mapsto [0,1)$, is defined as the remainder on division by $1$. In particular, $\{a\}$ denotes the fraction part of $a$.

We now present a result on convex sets in $\R^2$ that will be crucial in proving one of our results. 
\begin{lem}{\label{lem:convexity}}
    Consider the convex sets ${S}_1, {S}_2 \in \R^2$ and the strip $\set{\x \in \R^2}{l \leq x_2 \leq u}$. If
    $$
        {S}_1 \cap {S}_2 \cap \set{\x \in \R^2}{x_2 = a} \neq \emptyset~~~~\forall~~a\in [l,u]
    $$
    then $({S}_1 \cup {S}_2) \cap \set{\x \in \R^2}{l \leq x_2 \leq u}$ is a convex set.
\end{lem}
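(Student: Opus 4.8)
The plan is to show that any line segment joining two points of $(S_1 \cup S_2) \cap \{l \le x_2 \le u\}$ stays inside the set. Write $T := (S_1 \cup S_2) \cap \{\x : l \le x_2 \le u\}$ and let $\p, \q \in T$. Since each of $S_1, S_2$ is convex and the strip is convex, if $\p$ and $\q$ both lie in $S_1$ (or both in $S_2$), the segment $[\p,\q]$ lies in that $S_i$ and in the strip, hence in $T$; so the only case requiring work is $\p \in S_1 \setminus S_2$ and $\q \in S_2 \setminus S_1$ (up to relabelling). Parametrize $\x(\theta) = (1-\theta)\p + \theta\q$ for $\theta \in [0,1]$; its second coordinate $x_2(\theta)$ is an affine function of $\theta$ taking values in $[l,u]$, so it suffices to show $\x(\theta) \in S_1 \cup S_2$ for every $\theta$.

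The key step is to use the hypothesis to find a ``crossover'' point. For each $\theta$, the value $a(\theta) := x_2(\theta)$ lies in $[l,u]$, so by hypothesis there is a point of $S_1 \cap S_2$ on the horizontal line $\{x_2 = a(\theta)\}$. I would argue as follows: let $\Theta_1 := \{\theta \in [0,1] : \x(\theta) \in S_1\}$ and $\Theta_2 := \{\theta \in [0,1] : \x(\theta) \in S_2\}$. Both are nonempty ($0 \in \Theta_1$, $1 \in \Theta_2$). The crucial sub-claim is that each $\Theta_i$ is a (relatively) closed subinterval of $[0,1]$: indeed, convexity of $S_i$ forces $\Theta_i$ to be convex, because if $\x(\theta'),\x(\theta'') \in S_i$ then the whole sub-segment between them, which is a sub-segment of $[\p,\q]$, lies in $S_i$. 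Closedness will follow from closedness of convex sets — or, if the $S_i$ are not assumed closed, I would instead work with the hypothesis directly to avoid any topological subtlety (see below). Granting the sub-claim, $\Theta_1 = [0,\beta]$ and $\Theta_2 = [\alpha,1]$ for some $\alpha,\beta$, and it remains to show $\alpha \le \beta$, i.e. $\Theta_1 \cup \Theta_2 = [0,1]$.

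Suppose for contradiction that $\beta < \alpha$, so there is $\theta^\star \in (\beta,\alpha)$ with $\x(\theta^\star) \notin S_1 \cup S_2$. By hypothesis applied to $a = x_2(\theta^\star) \in [l,u]$, pick $\r \in S_1 \cap S_2$ with $r_2 = x_2(\theta^\star)$; also $\p \in S_1$ has $p_2 = x_2(0)$ and $\q \in S_2$ has $q_2 = x_2(1)$. Now the point $\x(\theta^\star)$ lies on the horizontal line through $\r$, strictly between the ``$S_1$ side'' and the ``$S_2$ side'': more precisely, I would consider the segment $[\p,\r] \subseteq S_1$ and the segment $[\r,\q] \subseteq S_2$, both convex combinations, and observe that their union is a connected path from $\p$ to $\q$ whose second coordinate is monotone on each piece; comparing with the straight segment $[\p,\q]$ via the intermediate value theorem on the horizontal coordinate at height $x_2(\theta^\star)$ shows that $\x(\theta^\star)$ must coincide with a convex combination lying in $[\p,\r]$ or in $[\r,\q]$, hence in $S_1 \cup S_2$ — contradiction. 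I expect the main obstacle to be making this last ``sandwiching'' argument fully rigorous: one must handle the orientation of the horizontal coordinate carefully (the parametrization may not be monotone in $x_1$), and the degenerate case where the segment $[\p,\q]$ is itself horizontal needs separate, easy treatment. If the $S_i$ are not assumed closed, the cleanest fix is to replace ``$\x(\theta^\star) \notin S_1\cup S_2$'' by choosing $\theta^\star$ in the open gap and then deriving the contradiction from the convexity relations above, which never actually require closedness.
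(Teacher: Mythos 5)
Your strategy — reduce to $\p\in S_1\setminus S_2$, $\q\in S_2\setminus S_1$, show that $\Theta_1=\{\theta:\x(\theta)\in S_1\}$ and $\Theta_2$ are subintervals containing $0$ and $1$ respectively, and argue they must cover $[0,1]$ — is the same contradiction-at-an-intermediate-height idea the paper uses, and the sub-claim that each $\Theta_i$ is a subinterval is correct. The decisive step is the ``sandwiching'' at $\theta^\star$, and you half-sense that it is shaky: it in fact fails. With $p_2<a:=x_2(\theta^\star)<q_2$, the path $[\p,\r]\cup[\r,\q]$ meets the horizontal line $\{x_2=a\}$ \emph{only at the single point $\r$} (the second coordinate is strictly monotone on each piece and attains $a$ only at the common endpoint $\r$), so the intermediate value theorem on the height gives nothing; it does not force $\x(\theta^\star)$ onto that path, and in general $\x(\theta^\star)\ne\r$.

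Worse, the lemma as stated is false, so no patch of this step can succeed. Take $l=-1$, $u=1$, $S_1=\conv\{(1,1),(-1,-1),(1,-1)\}$, $S_2=\conv\{(1,1),(1,-1),(-1,1)\}$. The horizontal slice of $S_1$ at height $a$ is $[a,1]$ and of $S_2$ is $[-a,1]$, so $S_1\cap S_2\cap\{x_2=a\}=[\,|a|,1\,]\times\{a\}\ne\emptyset$ for all $a\in[-1,1]$ and the hypothesis holds. Yet $\p=(-1,-1)\in S_1$ and $\q=(-1,1)\in S_2$ lie in $(S_1\cup S_2)\cap\{-1\le x_2\le 1\}$ while their midpoint $(-1,0)$ lies in neither $S_1$ nor $S_2$ (both have slice $[0,1]$ at height $0$), so the restricted union is not convex. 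This realizes exactly the configuration your argument does not control: $\p$ and $\q$ lie on the \emph{same} side of $S_1\cap S_2$ on their respective horizontal lines, so there is no crossover point to pass through.

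For what it is worth, the paper's own proof has the identical gap: it asserts ``these points will exist because of our assumptions'' for points $(x_{S_1},a)\in S_1\setminus S_2$ and $(x_{S_2},a)\in S_2\setminus S_1$ straddling $\tilde x$ together with a point of $S_1\cap S_2$, but in the example above $S_1\setminus S_2$ and $S_2\setminus S_1$ are empty at height $0$, so no such points exist. You did not miss an idea the paper supplies; the statement needs a stronger hypothesis (for instance, one that rules out $\p,\q$ lying on the same side of $S_1\cap S_2$), and any fix would then have to be checked against the later uses of this lemma in the paper.
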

\begin{proof}
Let ${U} = ({S}_1 \cup {S}_2) \cap \set{\x \in \R^2}{l \leq x_2 \leq u}$. Assume that ${U}$ is non-convex. Thus, there exists points $\x, \y \in {U}$ \ignore{Indeed, if both $\x^1$ and $\x^2$ belong to ${S}_1$ then the line segment joining $x_1$ and $x_2$ will lie entirely in ${S}_1$, since ${S}_1$ is convex. A similar argument can be made for ${S}_2$ as well. Assume now, without loss of generality, $\x^1 \in {S}_1, \x^2 \in {S}_2$.} such that a point in their convex hull (line segment joining $\x, \y$), is not contained in ${U}$. In particular, $\exists~~ \tilde{\x} = (\tilde{x},a) \in \conv(\x, \y)$ and $\tilde{\x} \not\in {U}$. 

Since $\x, \y \in {U}$ and $\tilde{\x} = (\tilde{x},a) \in \conv(\x, \y)$, it follows that $a \in [l,u]$. Furthermore, consider three additional points $(x_{{S}_1},a) \in {S}_1\setminus{S}_2$, $(x_{{S}_2},a) \in {S}_2\setminus{S}_1$ and $(x_{\cap},a)\in {S}_1\cap{S}_2$ such that either $x_{{S}_1} < \tilde{x} < x_{\cap}$ or $x_{\cap} < \tilde{x} < x_{{S}_2}$ (these points will exist because of our assumptions). If $x_{{S}_1} < \tilde{x} < x_{\cap}$, then since both $(x_{{S}_1},a), (x_{\cap},a)\in {S}_1$, their convex combination $(\tilde{x},a)\in {S}_1$ (since ${S}_1$ is convex), and consequently $(\tilde{x},a)\in{U}$, which poses a contradiction. Alternatively, if $x_{\cap} < \tilde{x} < x_{{S}_2}$, then since both $(x_{\cap},a), (x_{{S}_2},a)\in {S}_2$, their convex combination $(\tilde{x},a)\in {S}_2$ (since ${S}_2$ is convex), and consequently $(\tilde{x},a)\in{U}$, which also poses a contradiction. The result follows.
\end{proof}
\begin{figure}[ht]
\centering
\begin{tikzpicture}[scale=0.75]
    \draw[convexset,scale = 1.33]
            (2,-2) .. controls + (0.2,2) and + (-0.2,1) ..
            (4,-1) .. controls + (0.2,-0.5) and + (0.5,0) ..
            (3.2,-5) .. controls + (-0.5,0) and + (-0.1,-1) ..
            (2,-2) -- cycle;
            \node  at (4.2,-6) {\footnotesize${S}_2$};
    \draw[convexset,fill opacity=0.5]
            (0,0) .. controls + (0.2,0) and +(-1,0) ..
            (2,-4) .. controls + (1,0) and + (-0.2,0) ..
            (4,0) .. controls + (0.2,0) and + (1,0) ..
            (2,-7) .. controls + (-1,0) and + (-0.2,0) ..
            (0,0) -- cycle;
    \draw[segment,dashed] (-0.5,-1.5){} -- (5.5,-1.5){};
    \draw[segment,dashed] (-0.5,-5.5){} -- (5.5,-5.5){};
    \node  at (-1.3,-1.5) {\footnotesize$x_2=u$};
    \node  at (-1.3,-5.5) {\footnotesize$x_2=l$};
    \node  at (-1.3,-3) {\footnotesize$x_2=a$};
    \node  at (2,-6) {\footnotesize${S}_1$};
    \node  at (1,-1.8) {\footnotesize$\x$};
    \node  at (4,-5) {\footnotesize$\y$};
    \draw (0.5,-2) node {\tiny\textbullet};
    \draw (3.6,-5) node {\tiny\textbullet};
    \draw[dashed] (0.5,-2)--(3.6,-5);
    \draw (1.54,-3) node {\tiny\textbullet};
    \draw (0.75,-3) node {\tiny\textbullet};
    \draw (4.5,-3) node {\tiny\textbullet};
    \draw (3.3,-3) node {\tiny\textbullet};
    \draw[dashed] (-0.5,-3)--(5.5,-3);
    \node  at (1.6,-2.5) {\tiny$(\tilde{x},a)$};
    \node  at (0.75,-3.3) {\tiny$(x_{{S}_1},a)$};
    \node  at (3.13,-3.3) {\tiny$(x_{\cap},a)$};
    \node  at (4.35,-3.3) {\tiny$(x_{{S}_2},a)$};
\end{tikzpicture}
\caption{\small Depicting contradiction for Lemma \ref{lem:convexity} when either ${S}_1$ or ${S}_2$ is non-convex.}
\end{figure}
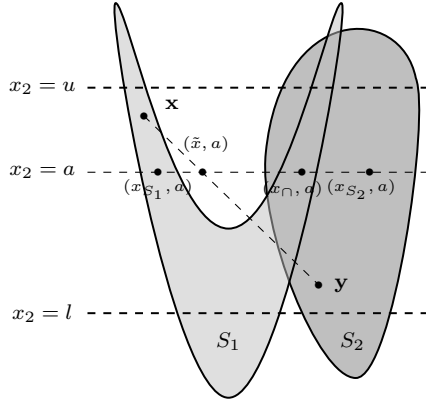

Since it suffices to prove the conjecture for integral speed vectors \cite{henze2017covering}, we will restrict our attention to the speed vectors $\n=(n_1,n_2,\ldots,n_k)\in(\mathbb{N}^+)^k$. We shall assume throughout that $n_i\geq n_j$ $\forall~~1\leq i < j \leq k$.

Consider a vector of speeds $\n\in(\mathbb{N}^+)^k$. Then, using the view-obstruction version of the conjecture, the \emph{Lonely Runner polyhedron}, ${P}(\n)$ defined by~\citet{polyhedra2019matthias} is
\begin{equation*}
{P}(\n):=\left\{\textbf{x}\in \mathbb{R}^k:\dfrac{n_i-kn_j}{k+1}\leq n_j x_i - n_i x_j\leq \dfrac{kn_i-n_j}{k+1}~\text{for}~1\leq i<j\leq k\right\} \ .
\end{equation*}
With respect to ${P}(\n)$ and any projection thereof considered in this work, we define the width of the set along a direction in the following.
\begin{dfn}
Consider ${S}\subseteq \mathbb{R}^n$ and $\mathbf{a}\in\mathbb{R}^n$. The width of ${S}$ in the direction of $\mathbf{a}$ is defined as $w_S(\mathbf{a}) = \underset{\x\in {S}}{\max}~\mathbf{a}\cdot \x-\underset{\x\in {S}}{\min}~\mathbf{a}\cdot \x$.
\end{dfn}
\ignore{
\begin{dfn}
The projection of $\mathbf{x} =(x_1,x_2,...,x_n)\in\mathbb{R}^n$ on to $\mathbb{R}^m$ is $\mathbf{x'}=(x_1,x_2,...,x_m)$ when $m\leq n$.
\end{dfn}

\begin{dfn}
$\mathbb{Z}^k$ is called the standard integer lattice. Any point $\mathbf{x}\in\mathbb{Z}^k$ is called an integer point.
\end{dfn}

\begin{dfn}
The translate of $\mathbf{x} =(x_1,x_2,...,x_n)\in\mathbb{R}^n$ by $a_i$ units, along the positive direction of each coordinate, is the point $\mathbf{x'}=(x_1+a_1,x_2+a_2,...,x_n+a_n)$.
\end{dfn}

\begin{dfn}
A translate is said to be an integer translate if all the $a_i$ above are integers.
\end{dfn}

\begin{dfn}
The translate of a set $X\in\mathbb{R}^n$ by $a_i$ units, along the positive direction of each coordinate, is $X'=\{\mathbf{x'}\in\mathbb{R}^n:\mathbf{x'}=(x_1+a_1,x_2+a_2,...,x_n+a_n)\}$.
\end{dfn}

For simplicity of notation, the above will be written as $X'=X+(a_1,a_2,...,a_n)$.
}
\begin{prop}\text{{\normalfont \cite{polyhedra2019matthias}}}
$\n\in(\mathbb{N}^+)^k$ is a lonely runner instance if and only if ${P}(\n)\cap\mathbb{Z}^k\neq \emptyset$.
\end{prop}
The intuition for the above is that, if an integer point $\mathbf{m}=(m_1,\ldots,m_k)$ lies in ${P}(\n)$, then there is a time $t$ when the $i^{\text{th}}$ runner has completed exactly $m_i$ rounds and all the runners are in the region $\left[\dfrac{1}{k+1},\dfrac{k}{k+1}\right]$.
\begin{dfn}
A time $t$ is a \emph{suitable time} for $\n$ if all runners are in the region $\left[\dfrac{1}{k+1},\dfrac{k}{k+1}\right]$ 
\ignore{$\n$ is a lonely runner instance} 
at time $t$.
\end{dfn}

\section{Our contributions}
Apart from the families of \emph{lonely runner instances} proposed in \cite{polyhedra2019matthias}, we propose two new families of instances satisfying \Cref{conj:lonelyrunner2}. We highlight these families of \emph{lonely runner instances} in Theorem \ref{thm:1} and Theorem \ref{thm:2}. The proofs of these two results are in Section~\ref{sec:mainproofs}. Polyhedral properties of suitable projections of ${P}(\n)$ are crucial ingredients in the proofs of the main results and are illustrated in Section \ref{sec:proofs}.
\begin{thm}\label{thm:1}
If $\n$ satisfies $k\geq 4$ and $n_2\left(\dfrac{k}{n_3}-\dfrac{1}{n_k}\right)\geq k+1$, then it is a lonely runner instance.
\end{thm}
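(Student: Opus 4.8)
The plan is to exhibit an integer point of $P(\mathbf n)$; by the theorem of \citet{polyhedra2019matthias} this certifies that $\mathbf n$ is a lonely runner instance, and it is equivalent to producing a \emph{suitable time} $t^{*}$. It helps to reason through the latter. Once the number $m_i\in\mathbb Z$ of rounds of runner $i$ is fixed, the times during that round at which runner $i$ lies in $\big[\tfrac1{k+1},\tfrac k{k+1}\big]$ form the interval $J_i(m_i)=\big[\tfrac{m_i}{n_i}+\tfrac1{(k+1)n_i},\ \tfrac{m_i}{n_i}+\tfrac{k}{(k+1)n_i}\big]$, of length $\tfrac{k-1}{(k+1)n_i}$, and a direct computation identifies the pair of inequalities defining $P(\mathbf n)$ for the indices $i<j$ with the single requirement $J_i(m_i)\cap J_j(m_j)\neq\varnothing$. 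As these are intervals on a line, pairwise overlap forces a common point, so a lattice point of $P(\mathbf n)$ is precisely a choice $(m_1,\dots,m_k)$ whose $k$ lonely windows all meet; the task is to make that choice, and the hypothesis dictates how.

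The first ingredient, of the kind prepared in Section~\ref{sec:proofs}, is that for any index set $I$ the projection $\mathrm{proj}_I(P(\mathbf n))$ is cut out only by the defining inequalities with both indices in $I$: any family of windows over $I$ with pairwise non-empty intersections extends by placing each remaining window centred at a common point of the chosen ones. In particular $\mathrm{proj}_{\{2,3\}}(P(\mathbf n))$ is the single strip $\tfrac{n_2-kn_3}{k+1}\le n_3x_2-n_2x_3\le\tfrac{kn_2-n_3}{k+1}$; the admissible values of $n_3x_2-n_2x_3$ form an interval of length $\tfrac{(k-1)(n_2+n_3)}{k+1}>\gcd(n_2,n_3)$ for $k\ge 4$, so it contains a multiple of $\gcd(n_2,n_3)$ and hence the strip contains lattice points $(m_2,m_3)$ --- choices of rounds for runners $2$ and $3$ with $V:=J_2(m_2)\cap J_3(m_3)\neq\varnothing$. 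Rewriting the hypothesis $n_2\big(\tfrac{k}{n_3}-\tfrac1{n_k}\big)\ge k+1$ as $\tfrac1{n_3}-\tfrac1{n_2}\ \ge\ \tfrac1{(k+1)n_3}+\tfrac1{(k+1)n_k}$, whose left side is the surplus of runner $3$'s period over runner $2$'s, one sees that $(m_2,m_3)$ can in addition be chosen so that $V$ is long --- longer than a full window of runner $2$ by at least $\tfrac{2}{(k+1)n_k}$ --- and this surplus is precisely what the slowest runner will later consume.

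With $V$ so controlled, I would fill in the remaining rounds. The other runners split into the ``slow'' ones $3<j\le k$, whose speeds obey $n_j\le n_3$ and whose windows are therefore at least as long as runner $3$'s, and the fastest runner $n_1\ge n_2$, whose windows are the most tightly spaced. Intersect $V$ successively with a suitable window of each slow runner $j$, losing at most $\tfrac{2}{(k+1)n_j}$ of length at step $j$, and finally with a window of runner $1$, which meets any surviving interval longer than $\tfrac1{n_1}$; the length budget supplied by the hypothesis is arranged to outlast all of these intersections, so the final common window is non-empty. Lemma~\ref{lem:convexity} makes this last stage rigorous without a proliferation of cases: as the integer choices for $m_1$ and $m_4,\dots,m_k$ vary, the set of $(x_2,x_3)$ that complete to a full integer solution is a union of convex cells overlapping along every horizontal line inside one period of the slowest runner, so the lemma turns that union into a convex set and the argument closes with a width estimate.

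The delicate point is that $n_1$ and the middle speeds $n_4,\dots,n_{k-1}$ are absent from the hypothesis, so one may not discard a constant fraction of $V$ at any one of them --- a union bound on their ``bad'' sets already fails for $k\ge 5$. The argument must instead exploit the ordering $n_1\ge n_2\ge\cdots\ge n_k$, whereby slower runners have strictly longer windows, together with the exact size of $V$ that the hypothesis guarantees, to ensure that the nested intersection of all $k$ lonely windows never degenerates to the empty set. Converting ``$V$ is large enough'' into ``all $k$ windows meet'', quantitatively through the width of the pertinent projection and Lemma~\ref{lem:convexity}, is the crux.
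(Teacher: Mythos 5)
Your reformulation of the lonely-runner polyhedron in terms of per-runner windows $J_i(m_i)$ is correct, and the observation that the constraint indexed by $(i,j)$ is exactly $J_i(x_i)\cap J_j(x_j)\neq\emptyset$ (so that lattice points of $P(\mathbf n)$ are round-choices with pairwise-meeting, hence by one-dimensional Helly simultaneously-meeting, windows) is a valid and pleasant way to think about the object. The projection claim --- that $\mathrm{proj}_I P(\mathbf n)$ is cut out by only the inequalities supported on $I$ --- is also true, as a short Fourier--Motzkin computation confirms. But note that this means the true projection onto two coordinates is an \emph{unbounded} strip, whereas the set the paper calls ${P}_2(\mathbf n)$ and works with is the bounded slice $\{(x_1,x_2):(x_1,x_2,0,\ldots,0)\in P(\mathbf n)\}$, which picks up the additional box constraints involving $n_3$ and $n_k$. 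These are genuinely different objects, and the hypothesis $n_3<kn_k$ (which is what makes the slice nonempty) enters the paper's argument precisely at that point. Your strip-plus-$\gcd$ step therefore produces a pair $(m_2,m_3)$ with $J_2(m_2)\cap J_3(m_3)\neq\emptyset$, but it does not yet give any control that survives the remaining coordinates.

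The argument breaks at two places. First, the quantitative claim that ``$(m_2,m_3)$ can be chosen so that $V:=J_2(m_2)\cap J_3(m_3)$ is longer than a full window of runner $2$'' is impossible: $V$ is a subset of $J_2(m_2)$, so its length is at most $\tfrac{k-1}{(k+1)n_2}$, never more. What the hypothesis does give you is a lower bound on $\tfrac{1}{n_3}-\tfrac{1}{n_2}$, i.e.\ on the excess length of $J_3$ over $J_2$, but turning that into slack for the remaining runners requires an argument you have not supplied. Second, and more seriously, the ``fill in the remaining rounds'' step is exactly where the proof lives, and you concede it yourself: you state that the naive loss of $\tfrac{2}{(k+1)n_j}$ per runner is a union bound that ``already fails for $k\ge 5$,'' and then say the argument ``must instead exploit the ordering \ldots so that the nested intersection never degenerates,'' without carrying that out. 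That remaining step is precisely what the paper's Lemmas~\ref{lem:lengthandwidth}--\ref{lem:lines} accomplish, and they do so via a construction your proposal never reaches: working inside the \emph{slice} $Q$, cutting it along $L_1$, folding ${Q}_2$ down by $\mathbf{e}_2$ onto ${Q}_1$ to form a region ${Q}_4$ of $\mathbf{e}_2$-width exactly one, and then --- crucially --- splitting into the cases $2n_1\le (k-1)n_2$ and $2n_1>(k-1)n_2$ to guarantee $\mathbf{e}_1$-width at least one on every horizontal lattice line. Your proposal contains neither the fold nor the case split, and without them the ``width estimate'' you appeal to is unproved. As written, this is a plan with the hard part left open, not a proof.
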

\begin{rem}
 Examples of speed vectors that are \emph{lonely runner instances} due to Theorem \ref{thm:1} and not due to previous known results are: 
 $$(17,16,7,6,5,4,2), (18,16,7,6,5,4,3,2) \text{ and } (20,18,8,7,6,5,4,3,2).$$
\end{rem}
\begin{thm}\label{thm:2}
If $n_2\leq kn_k$ and $n_1~\textit{{\normalfont mod}}((k+1)n_k)\in [n_k,kn_k]$, then $\n$ is a lonely runner instance.
\end{thm}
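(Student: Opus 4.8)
The plan is to use the characterisation of \cite{polyhedra2019matthias} in its ``suitable time'' form: $\mathbf n$ is a lonely runner instance if and only if there is a $t\ge 0$ with $\{t n_i\}\in[\tfrac1{k+1},\tfrac{k}{k+1}]$ for every $i$. So rather than hunt for an integer point in $P(\mathbf n)$, I would just guess the candidate time
\[
  t^{\ast}=\frac{1}{(k+1)\,n_k}
\]
and verify it is suitable. The point of this choice is that it simultaneously absorbs all three pieces of the hypothesis.

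First, runner $k$ sits at $t^{\ast} n_k=\tfrac1{k+1}$, the left endpoint of the loneliness window, so it is (just barely) lonely. Second, for an intermediate runner $2\le i\le k-1$ the ordering together with $n_2\le k n_k$ gives $n_k\le n_i\le k n_k$, hence $t^{\ast} n_i=\tfrac{n_i}{(k+1)n_k}\in[\tfrac1{k+1},\tfrac{k}{k+1}]\subset[0,1)$; so all of runners $2,\dots,k-1$ are lonely at $t^{\ast}$ (having completed $0$ laps). Third — and this is where the congruence is spent — one has $\{t^{\ast} n_1\}=\dfrac{n_1\bmod((k+1)n_k)}{(k+1)n_k}$, and the assumption $n_1\bmod((k+1)n_k)\in[n_k,kn_k]$ says exactly that this fractional part lies in $[\tfrac1{k+1},\tfrac{k}{k+1}]$, since $[n_k,kn_k]=(k+1)n_k\cdot[\tfrac1{k+1},\tfrac{k}{k+1}]$. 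Hence every runner is lonely at $t^{\ast}$, which is what we wanted. If one prefers to keep the argument on the polyhedral side, the equivalent move is to check directly that the integer vector $\big(\lfloor n_1/((k+1)n_k)\rfloor,0,\dots,0\big)$ satisfies the defining inequalities of $P(\mathbf n)$.

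I do not anticipate a real obstacle: the entire content is the recognition that $[n_k,kn_k]$ is the loneliness window rescaled to the period $1/n_k$, which converts the congruence hypothesis into the single statement ``runner $1$ is lonely at $t^{\ast}$'', while $n_2\le k n_k$ is precisely the condition that pins all of runners $2,\dots,k-1$ inside the window at that same instant. The only mild care needed is at the boundary — runner $k$ lands on the endpoint $\tfrac1{k+1}$, which is included in the closed window, and the interval $[n_k,kn_k]$ is likewise closed, so no strict-inequality issue arises. If the polyhedral route is taken instead, there is a secondary routine check that the mixed constraints indexed by pairs $(1,j)$, $j\ge 2$, hold; writing $n_1=(k+1)n_k q+r$ with $r=n_1\bmod((k+1)n_k)\in[n_k,kn_k]$ reduces each of them to an elementary comparison among $n_j$, $n_k$, $kn_k$ and $r$, and the constraints indexed by pairs $(i,j)$ with $2\le i<j$ collapse to $n_i\le k n_j$ and $n_j\le k n_i$, which are immediate from $n_i\le n_2\le k n_k\le k n_j$ and $n_i\ge n_j$.
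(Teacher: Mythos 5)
Your proof is correct, and after unwinding the dictionary between suitable times and lattice points it is essentially the paper's argument. The paper projects $P(\mathbf n)$ onto the $x_1$-axis (after noting $n_2\le kn_k$ lets one set $x_2=\dots=x_k=0$), showing the interval $\left[\tfrac{n_1}{(k+1)n_k}-\tfrac{k}{k+1},\ \tfrac{kn_1}{(k+1)n_2}-\tfrac{1}{k+1}\right]$ contains the integer $a=\lfloor n_1/((k+1)n_k)\rfloor$. That integer point $(a,0,\dots,0)$ corresponds precisely to your time $t^\ast=\tfrac{1}{(k+1)n_k}$, at which runner $1$ has completed $a$ laps and runners $2,\dots,k$ zero laps. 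Your route avoids the paper's ``projection lemma'' scaffolding and instead verifies the suitable-time condition directly, which is arguably a touch cleaner; the content — the hypothesis $n_2\le kn_k$ keeps runners $2,\dots,k$ in $[\tfrac1{k+1},\tfrac{k}{k+1}]$ at time $t^\ast$, while the congruence on $n_1$ places runner $1$ there — is identical.
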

\begin{rem}
Examples of vectors of speeds that are \emph{lonely runner instances} due to Theorem \ref{thm:2} and not due to previous known results are: 
$$(20,14,8,6,5,4,2), (24,14,10,9,8,6,5,2) \text{ and } (23,18,15,10,8,7,6,4,2).$$
\end{rem}
Furthermore, we provide an alternate proof for establishing the set of speeds satisfying $n_1\leq kn_k$ as \emph{lonely runner instances} \cite{polyhedra2019matthias}. In particular, we provide an explicit time at which these speed vectors become \emph{lonely runner instances}.
\begin{thm}\label{thm:6}
$t=\dfrac{k}{(k+1)n_1}$ is a suitable time for $\n$ if and only if $\n$ satisfies $n_1\leq kn_k$.
\end{thm}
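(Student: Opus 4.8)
The plan is to argue directly from the definition of a suitable time rather than routing through the polyhedron $P(\mathbf{n})$. Recall that $t$ is suitable for $\mathbf{n}$ precisely when $\{t n_i\}\in\left[\frac{1}{k+1},\frac{k}{k+1}\right]$ for every $i\in[k]$. So I would substitute $t=\frac{k}{(k+1)n_1}$ and examine the quantity $t n_i=\frac{k n_i}{(k+1)n_1}$ for each $i$.

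The key observation is that the standing ordering assumption $n_1\geq n_2\geq\cdots\geq n_k\geq 1$ forces $0< t n_i\leq \frac{k}{k+1}<1$, so the fractional part $\{t n_i\}$ is simply $t n_i$ itself, with no wraparound. In particular the upper bound $\frac{k}{k+1}$ in the suitability condition holds automatically for every runner (with equality exactly when $n_i=n_1$), and suitability of $t$ collapses to the single family of lower-bound inequalities $t n_i\geq \frac{1}{k+1}$, $i\in[k]$. Each of these rearranges to the equivalent condition $n_1\leq k n_i$.

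To finish, I would note that $n_1\leq k n_i$ holds for all $i\in[k]$ if and only if it holds for the smallest speed, that is, $n_1\leq k n_k$; this yields one direction. For the converse, if $n_1>k n_k$ then the slowest runner satisfies $t n_k=\frac{k n_k}{(k+1)n_1}<\frac{1}{k+1}$, so $t$ is not suitable. Together with the result of \cite{polyhedra2019matthias} that $n_1\leq k n_k$ already makes $\mathbf{n}$ a lonely runner instance, this simultaneously re-establishes that family and pins down an explicit time witnessing it.

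I do not anticipate a genuine obstacle: the only step requiring a moment's care is verifying that $t n_i$ never reaches $1$, so that $\{t n_i\}=t n_i$ and the upper inequality is free — and this is precisely the point at which normalizing by the fastest speed $n_1$ is used.
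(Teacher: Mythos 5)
Your argument is correct and is essentially the paper's own proof: both directions rest on the same two observations, namely that $n_i\leq n_1$ forces $t n_i\leq\frac{k}{k+1}<1$ so the fractional part is just $t n_i$, and that the remaining lower-bound condition $t n_i\geq\frac{1}{k+1}$ rearranges to $n_1\leq k n_i$, which is tightest at $i=k$. Your presentation is marginally cleaner in stating explicitly that the upper bound is automatic, but the substance and route are the same.
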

It is known that $\n$ is a \emph{lonely runner instance} if and only if there is a suitable time $t$ that is at most $1$ \cite{bienia1998flows,tao2018some,perarnau2016correlation}. Intuitively, this makes sense because, at exactly unit time, each of the runners would be back at the start position. Thus, the combined motion of all the runners is periodic, with periodicity $1$.  We strengthen the condition on $t$ and show the following.
\begin{thm}\label{thm:7}
$\n$ is a lonely runner instance if and only if there is a suitable time $t\leq \dfrac{1}{2}$.
\end{thm}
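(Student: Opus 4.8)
The plan is to exploit two elementary symmetries of the runners' motion. Since each $n_i$ is a positive integer, the map $t\mapsto\{t\}$ leaves every $\{tn_i\}$ unchanged, so the combined motion is periodic with period $1$; and the time reversal $t\mapsto 1-t$ sends each runner to its mirror image about the midpoint of the track. The target region $\left[\tfrac{1}{k+1},\tfrac{k}{k+1}\right]$ is symmetric about $\tfrac12$, so both operations preserve the property of being a suitable time, and together they will confine a suitable time to $\left(0,\tfrac12\right]$.

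The backward implication is immediate from the definitions: if $t\le\tfrac12$ is a suitable time for $\mathbf{n}$, then every $tn_i$ lies within distance $\tfrac{1}{k+1}$ of the nearest integer, so $\mathbf{n}$ is a lonely runner instance by \Cref{conj:lonelyrunner2}.

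For the forward implication I would start from the known characterisation \cite{bienia1998flows,tao2018some,perarnau2016correlation} that a lonely runner instance admits a suitable time $t\in[0,1]$; since at $t\in\{0,1\}$ every runner sits at $0\notin\left[\tfrac{1}{k+1},\tfrac{k}{k+1}\right]$, we may take $t\in(0,1)$. (Alternatively, this citation can be sidestepped: from any suitable $t$, periodicity shows $\{t\}$ is suitable as well, and necessarily $\{t\}\in(0,1)$.) The key step is to verify that $t':=1-t$ is again a suitable time. Because $n_i\in\mathbb{N}^+$, we have $\{t'n_i\}=\{n_i-tn_i\}=\{-tn_i\}$; and $\{tn_i\}\in\left[\tfrac{1}{k+1},\tfrac{k}{k+1}\right]$ forces $\{tn_i\}\neq 0$, hence $\{t'n_i\}=1-\{tn_i\}$, which again lies in $\left[\tfrac{1}{k+1},\tfrac{k}{k+1}\right]$ by symmetry of that interval about $\tfrac12$. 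So all runners lie in the required region at time $t'$ too. Since $t+t'=1$, at least one of $t,t'$ is at most $\tfrac12$, yielding a suitable time $\le\tfrac12$.

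The only point that needs care — and the one place the argument could fail if rushed — is the fractional-part bookkeeping in the reflection step: one must use that $\{tn_i\}$ is strictly positive (which holds precisely because it lies in the region $\left[\tfrac{1}{k+1},\tfrac{k}{k+1}\right]$ with $k\ge1$) so that $t\mapsto 1-t$ genuinely sends it to $1-\{tn_i\}$ rather than collapsing it to $0$. Everything else is routine bookkeeping with the definition of suitable time and the period-$1$ structure of the motion.
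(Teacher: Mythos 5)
Your argument is correct and matches the paper's proof in essence: both rely on the reflection symmetry $t \mapsto 1-t$ (the paper isolates this as Lemma~\ref{lem:t1-t}) together with the known existence of a suitable time in $[0,1]$, and then observe that at least one of $t$ and $1-t$ is at most $\tfrac{1}{2}$. Your careful handling of the fractional-part identity $\{(1-t)n_i\}=1-\{tn_i\}$, justified by $\{tn_i\}\neq 0$, is exactly the bookkeeping the paper carries out via the decomposition $n_it=a_i+b_i$.
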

We prove Theorems~\ref{thm:6} and \ref{thm:7} in Section~\ref{sec:timeproofs}. As a continuation to the characterization of families of \emph{lonely runner instances} through characterization of respective suitable times, as in Theorem \ref{thm:6} and Theorem \ref{thm:7}, we propose the following.
\begin{conj}\label{conj:1}
    For any $\n$ with $gcd(\n)=1$, there is always a suitable time of the form
    \begin{align*}
        \frac{m}{2^{\lceil\ln_{2}{(n_1)}+1\rceil}(k+1)n_1}
    \end{align*}
for some natural number $m$, where $\lceil{\cdot}\rceil$ denotes the ceiling function.
\end{conj}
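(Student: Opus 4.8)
This is a conjecture, so what I would present is a plan of attack together with the evidence and heuristics supporting it, rather than a complete proof. The starting point is Theorem~\ref{thm:6}: for instances with $n_1 \le k n_k$ the explicit suitable time $t = \frac{k}{(k+1)n_1}$ already has the conjectured shape with $m = k\cdot 2^{\lceil \ln_2(n_1)+1\rceil}$, so the conjecture is trivially true on that family. The substance of the conjecture is that even when $n_1$ is much larger than $k n_k$, one never needs a denominator worse than $2^{\lceil\ln_2(n_1)+1\rceil}(k+1)n_1$ — that is, the ``dyadic refinement level'' of the time can be capped by roughly $\log_2 n_1$ doublings of the base denominator $(k+1)n_1$.

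\textbf{Approach.} The plan is to combine a point in the Lonely Runner polyhedron $P(\mathbf n)$ with a bisection/rounding argument in the time variable. Suppose $\mathbf n$ is a lonely runner instance with $\gcd(\mathbf n)=1$; by the theorem of Beck--Hosten--Schymura quoted in the excerpt there is an integer point $\mathbf m \in P(\mathbf n)\cap \mathbb Z^k$, and by the discussion following that theorem the corresponding suitable time $t^\star$ satisfies $t^\star n_i \in [\tfrac{1}{k+1}+m_i,\,\tfrac{k}{k+1}+m_i]$ for each $i$, with $t^\star \le 1$ (indeed $\le \tfrac12$ by Theorem~\ref{thm:7}). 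The strip of suitable times near $t^\star$ has width at least $\frac{k-1}{(k+1)n_1}$ (this is exactly the constraint coming from the fastest runner, and all other runners impose weaker constraints since $n_i\le n_1$), so there is an \emph{interval} $I$ of suitable times of length $\ge \frac{k-1}{(k+1)n_1}$. I would then show that any interval of length $\ge \frac{c}{(k+1)n_1}$ contains a dyadic rational with denominator $2^{L}(k+1)n_1$ once $2^{L} \ge \frac{2}{c}$, i.e. once $L \gtrsim 1$; the real work is to upgrade this from ``some suitable time in $I$'' to ``a suitable time with the \emph{specified} denominator $2^{\lceil\ln_2 n_1 + 1\rceil}(k+1)n_1$'', which requires that the round counts $m_i$ forced by such a dyadic time stay consistent with $\mathbf m$ across all $k$ runners simultaneously.

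\textbf{Key steps, in order.} First, reduce to integer speeds with $\gcd(\mathbf n)=1$ and invoke $P(\mathbf n)\cap\mathbb Z^k\neq\emptyset$ to obtain $\mathbf m$ and the suitable-time interval $I\subseteq[0,\tfrac12]$ of length $w \ge \frac{k-1}{(k+1)n_1}$. Second, set $D = 2^{\lceil\ln_2 n_1+1\rceil}(k+1)n_1$ and observe $D \ge \frac{2 n_1\cdot(k+1) n_1}{1}\cdot$ (something), so $\frac1D$ is comfortably smaller than $w$; hence $I$ contains a multiple of $\frac1D$, say $t = \frac mD$. Third — the crux — verify that for this $t$ the fractional parts $\{t n_i\}$ still lie in $[\tfrac1{k+1},\tfrac{k}{k+1}]$; since $t\in I$ and $I$ is by construction contained in the suitable-time set, this is automatic, and the point is really just to certify that $t$ has the claimed denominator form. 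Fourth, handle the degenerate possibility that $I$ is empty or that the only suitable times cluster at a single point (which cannot happen for genuine instances because the polyhedron is full-dimensional in the relevant strip, a fact one extracts from the width computations in Section~\ref{sec:proofs}), and the edge case $n_1 = 1$.

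\textbf{Main obstacle.} The hard part is the exponent $\lceil\ln_2 n_1 + 1\rceil$ itself: the naive argument above only uses that $I$ has length $\gtrsim \frac{1}{(k+1)n_1}$, which would let \emph{any} sufficiently large power of $2$ work and does not explain why $\log_2 n_1$ doublings suffice but nothing smaller is claimed. I suspect the true mechanism is subtler — one wants the \emph{minimal} suitable time (or the suitable time realizing the lexicographically smallest $\mathbf m$) to itself be expressible with a bounded dyadic refinement of $\frac{1}{(k+1)n_1}$, and proving that likely needs a continued-fraction / three-distance-theorem analysis of the orbit $\{t n_1\}$ together with a simultaneous control of the slower runners, rather than the crude interval-packing bound. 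Establishing that the refinement level never exceeds $\lceil\ln_2 n_1+1\rceil$ — as opposed to some unbounded or much larger quantity — is exactly the content that makes this a conjecture and not a theorem, and I would expect the proof (if the statement is true) to require genuinely new input beyond the polyhedral framework recalled in this paper.
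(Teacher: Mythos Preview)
The paper does not prove this statement; it is presented as an open conjecture, supported only by the observation that Theorem~\ref{thm:6} handles the family $n_1\le kn_k$ (take $m = k\cdot 2^{\lceil\ln_2 n_1+1\rceil}$) and by exhaustive computation for all coprime $\mathbf{n}$ with $n_1\le 32$. There is therefore no proof in the paper to compare your approach against.

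Your plan has a genuine error in its second step. You assert that the connected component of suitable times around $t^\star$ has width \emph{at least} $\frac{k-1}{(k+1)n_1}$, reasoning that the fastest runner imposes the narrowest constraint and ``all other runners impose weaker constraints.'' But $\frac{k-1}{(k+1)n_1}$ is an \emph{upper} bound on the width of any such component, not a lower one: the suitable-time set is the \emph{intersection} of $k$ periodic interval families, and although slower runners contribute wider individual intervals, those intervals may be offset from the fast runner's. Concretely, for $k=2$, $\mathbf{n}=(3,2)$, the only suitable-time components in $[0,1]$ are $[1/6,2/9]$ and $[7/9,5/6]$, each of width $1/18<\frac{k-1}{(k+1)n_1}=1/9$. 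If your width claim were correct, your steps two and three would already constitute a complete proof (since $1/D\approx \tfrac{1}{2(k+1)n_1^2}\ll \tfrac{k-1}{(k+1)n_1}$), contradicting the conjecture's open status. There is also a circularity issue: you begin by assuming $\mathbf{n}$ is a lonely runner instance in order to obtain $t^\star$, but the conjecture as stated \emph{implies} the Lonely Runner Conjecture, so at best your outline targets the conditional assertion ``LRC $\Rightarrow$ Conjecture~\ref{conj:1}''. Your ``Main obstacle'' paragraph senses trouble but misplaces it --- the difficulty is not sharpening some large exponent down to $\lceil\ln_2 n_1+1\rceil$; it is that no unconditional lower bound on the width of a suitable-time component is available in the first place.
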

\ignore{
\begin{conj}\label{conj:2}
    If $\n$ satisfies $n_i\leq kn_k$ and $n_{i-1}>kn_k$, then there is a suitable time of the form
    \begin{align*}
        \frac{m}{(k+1)\prod_{l=i}^{k} n_l}
    \end{align*}
for some natural number $m$ such that $n_1|m$ or $(n_1+n_k)|m$.
\end{conj}}
There are two reasons for why we believe Conjecture \ref{conj:1} to be true: First, considering $m=k 2^{\lceil\ln_{2}{(n_1)}+1\rceil}$ yields a suitable time for the family of vectors satisfying $n_1\leq kn_k$. Additionally, we have computationally verified the existence of a suitable time given by Conjecture~\ref{conj:1} for all possible $(\n,k)$ such that $n_1\leq 32$, and gcd($\n$) = 1. In particular, we have $2^{32}-1=4294967295$ speed vectors, $\n$, with $n_1\leq 32$. Of these, $4294900694$ are vectors with co-prime elements (satisfying $\textit{\normalfont gcd}(\n)=1$). Further among these, $2646877074~(\approx 61.62\%)$ different $\n$ are characterized \emph{lonely runner instances} due to the known results, including Theorems \ref{thm:1} and $\ref{thm:2}$. Conjecture \ref{conj:1} thus, if true, yields a characterization for the remaining $38.38\%$ of speed vectors. We further verified Conjecture \ref{conj:1} for $10000$ random speed vectors that satisfy $k\leq 50$ and $n_1\leq 1000$. All of the random speed vectors considered here were such that they were not characterized as \emph{lonely runner instances} by any of the earlier known results.

From a polyhedral theory perspective, the best known sufficiency condition for a speed vector $\n$ to be a \emph{lonely runner instance} is for the corresponding \emph{Lonely runner polyhedron}, $P(\n)$, to not be integer free. As a conclusion to this work, we provide a new sufficiency condition for a speed vector to be a \emph{lonely runner instance} in \Cref{sec:sufficiency}, \Cref{cor:sufficiency}.
\section{Some Properties of Lonely Runner Polyhedra}\label{sec:proofs}
The major construct in the proofs of Theorems~\ref{thm:1} and \ref{thm:2} is the \emph{Lonely Runner polyhedron}, ${P}(\n)$, as defined in Section \ref{sec:preliminaries}. In particular,
\begin{equation*}
{P}(\n):=\left\{\textbf{x}\in \mathbb{R}^k:\dfrac{n_i-kn_j}{k+1}\leq n_j x_i - n_i x_j\leq \dfrac{kn_i-n_j}{k+1}~\text{for}~1\leq i<j\leq k\right\}
\end{equation*}
\setcounter{thm}{10}
\begin{prop}\label{thm:symmetry}
$P(\n)$ is symmetric about $-\dfrac{1}{2}\mathbf{e}$.
\end{prop}
\begin{proof}
Let $\mathbf{v}=(v_1,...,v_k)$ be an arbitrary point in $\R^k$. With this,
\begin{align*}
    -\dfrac{1}{2}\mathbf{e}+\mathbf{v}\in P(\n) & \iff \dfrac{n_i-kn_j}{k+1}\leq n_j\left(-\dfrac{1}{2}+v_i\right)-n_i\left(-\dfrac{1}{2}+v_j\right)\leq \dfrac{kn_i-n_j}{k+1}~~\forall 1\leq i<j\leq k\\
    & \iff -\dfrac{(k-1)(n_i+n_j)}{2(k+1)}\leq n_j v_i-n_i v_j\leq \dfrac{(k-1)(n_i+n_j)}{2(k+1)}~~\forall 1\leq i<j\leq k\\
    & \iff -\dfrac{(k-1)(n_i+n_j)}{2(k+1)}\leq -n_j v_i+n_i v_j\leq \dfrac{(k-1)(n_i+n_j)}{2(k+1)}~~\forall 1\leq i<j\leq k\\
    & \iff \dfrac{n_i-kn_j}{k+1}\leq n_j\left(-\dfrac{1}{2}-v_i\right)-n_i\left(-\dfrac{1}{2}-v_j\right)\leq \dfrac{kn_i-n_j}{k+1}~~\forall 1\leq i<j\leq k\\
    & \iff -\dfrac{1}{2}\mathbf{e}-\mathbf{v}\in P(\n)
\end{align*}
The result follows.
\end{proof}

A very important fact that we use to prove our main results is that any integer translate of a polyhedron, say ${P}+\mathbf{v}$, $\mathbf{v}\in \Z^k$, contains the same number of integer points as the original polyhedron, ${P}$. To observe this, first note that $\mathbf{a}\cdot \mathbf{x}=b$ is a face of ${P}$ if and only if $\mathbf{a}\cdot (\mathbf{x}-\mathbf{v})=b$ is a face of ${P}+\mathbf{v}$. Then, $\mathbf{l}\in \mathbb{Z}^k$ satisfies $\mathbf{a}\cdot \mathbf{x}=b$ if and only if $\mathbf{l}+\mathbf{v}$ satisfies $\mathbf{a}\cdot (\mathbf{x}-\mathbf{v})=b$. Due to this bijection between ${P}$ and ${P}+\mathbf{v}$, the number of integer points in ${P}$ and ${P}+\mathbf{v}$ are equal.\\
We also utilize projection arguments to prove some of the results. Consider the projection of ${P}(\n)$ on to the first $m$ coordinates, ${P}_m(\n)$. If $\n$ satisfies $n_{m+1}\leq kn_k$, it suffices to show that the ${P}_m(\n)$ contains an integer point $p=(p_1,\ldots,p_m)$ instead of proving that ${P}(\n)$ is not integer lattice-free. This is because, $p'=(p_1,\ldots,p_m,0,\ldots,0)$ would lie in ${P}(\n)$.

Consider the projection of ${P}(\n)$ on to the first two coordinates, ${P}_2(\n)$. Indeed ${P}_2(\n)$ is a polyhedron as well \cite{ziegler2012lectures}. For notational brevity,  we shall henceforth refer to ${P}_2(\n)$ as ${Q}$. The generating inequalities of ${Q}$ are:
\begin{gather*}
    \dfrac{n_1}{(k+1)n_k}-\dfrac{k}{k+1}\leq x_1\leq \dfrac{kn_1}{(k+1)n_3}-\dfrac{1}{k+1}\\
    \dfrac{n_2}{(k+1)n_k}-\dfrac{k}{k+1}\leq x_2\leq \dfrac{kn_2}{(k+1)n_3}-\dfrac{1}{k+1}\\
    \dfrac{n_1-kn_2}{k+1}\leq n_2x_1-n_1x_2\leq \dfrac{kn_1-n_2}{k+1}
\end{gather*}
In addition to the generating inequalities of ${Q}$, consider the following lines (Figures \ref{fig:1} and \ref{fig:2})
\begin{align*}
    &l_1:n_2x_1-n_1x_2=\dfrac{kn_1-n_2}{k+1}&& l_2:n_2x_1-n_1x_2=\dfrac{n_1-kn_2}{k+1}\\
    &L_1:x_2= \alpha = \dfrac{n_2}{(k+1)n_k}+\dfrac{1}{k+1}&& L_2:x_2= \beta = \dfrac{n_2}{(k+1)n_k}+\dfrac{2n_2}{(k+1)n_1}-\dfrac{k}{k+1}\\
    &L_3:x_2= \gamma = \dfrac{kn_2}{(k+1)n_3}-\dfrac{2n_2}{(k+1)n_1}-\dfrac{1}{k+1}&& L_4:x_2=\delta=\dfrac{1}{k+1}\left(\dfrac{n_2}{n_k}-1\right)\\
    &L_5:x_2=\zeta = \dfrac{k}{k+1}\left(\dfrac{n_2}{n_3}-1\right)&&L_6:x_1=\kappa=\dfrac{n_1}{(k+1)n_k}+\dfrac{1}{k+1}\\
\end{align*}
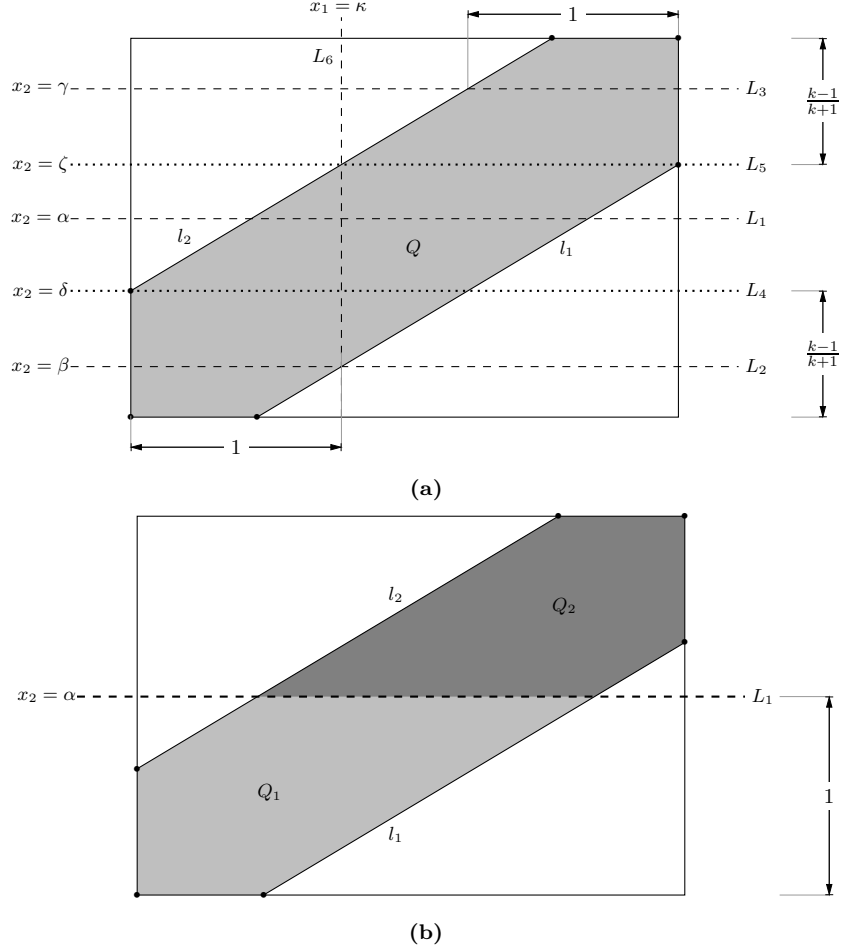
\begin{figure}
\centering
\begin{subfigure}[b]{\textwidth}
\centering
   \begin{tikzpicture}[scale=0.8,every node/.style={scale=0.8}]
        \coordinate (a) at (0,0);
        \coordinate (b) at (9.1,6.3);
        \coordinate (c) at (2.1,0);
        \coordinate (d) at (0,2.1);
        \coordinate (e) at (9.1,4.2);
        \coordinate (f) at (7,6.3);
        \coordinate (A) at (-1,0.84);
        \coordinate (B) at (10.1,0.84);
        \coordinate (C) at (-1,5.46);
        \coordinate (D) at (10.1,5.46);
        \coordinate (E) at (-1,2.1);
        \coordinate (F) at (10.1,2.1);
        \coordinate (G) at (-1,4.2);
        \coordinate (H) at (10.1,4.2);
        \coordinate (I) at (-1,3.5);
        \coordinate (J) at (10.1,3.5);
        \coordinate (K) at (2.1,0);
        \coordinate (L) at (2.1,6.3);
        \coordinate (M) at (3.5,0);
        \coordinate (N) at (3.5,6.65);
        \coordinate (O) at (-1,3.3);
        \coordinate (P) at (10.1,3.3);
        \fill[lightgray] (c)--(a)--(d)--(f)--(b)--(e)--cycle;
        \draw (a) node {\tiny\textbullet};
        \draw (b) node {\tiny\textbullet};
        \draw (c) node {\tiny\textbullet};
        \draw (d) node {\tiny\textbullet};
        \draw (e) node {\tiny\textbullet};
        \draw (f) node {\tiny\textbullet};
        \draw (c) to (e);
        \draw (d) to (f);
        \draw (0,0) rectangle (9.1,6.3);
        \draw[dashed] (A) to (B);
        \draw[dashed] (C) to (D);
        \draw[dotted,thick] (E) to (F);
        \draw[dotted,thick] (G) to (H);
        \draw[dashed] (M) to (N);
        \draw[dashed] (O) to (P);
        \node  at (7.25,2.8) {\small$l_1$};
        \node  at (0.9,3) {\small$l_2$};
        \node  at (10.4,5.46) {\small$L_3$};
        \node  at (10.4,3.3) {\small$L_1$};
        \node  at (10.4,0.84) {\small$L_2$};
        \node  at (10.4,4.2) {\small$L_5$};
        \node  at (10.4,2.1) {\small$L_4$};
        \node  at (-1.5,5.46) {\small$x_2=\gamma$};
        \node  at (-1.5,0.84) {\small$x_2 = \beta$};
        \node  at (-1.5,2.1) {\small$x_2 = \delta$};
        \node  at (-1.5,4.2) {\small$x_2 = \zeta$};
        \node  at (-1.5,3.3) {\small$x_2 = \alpha$};
        \node  at (3.45,6.8) {\small$x_1 = \kappa$};
        \node  at (4.7,2.8) {\small${Q}$};
        \node at (3.2,6){\small$L_6$};
        \dimline[line style = {line width=0.5},extension start length=-0.25, extension end length=-0.25,label style = {no slope}]{(11.5,0)}{(11.5,2.1)}{$\frac{k-1}{k+1}$};
        \dimline[line style = {line width=0.5},extension start length=0.25, extension end length=0.25,label style = {no slope}]{(11.5,6.3)}{(11.5,4.2)}{$\frac{k-1}{k+1}$};
        \dimline[line style = {line width=0.5},extension start length=-0.15, extension end length=-0.45,label style = {no slope}]{(0,-0.5)}{(3.5,-0.5)}{$1$};
        \dimline[line style = {line width=0.5},extension start length=0.35, extension end length=0.1,label style = {no slope}]{(5.6,6.7)}{(9.1,6.7)}{$1$};
    \end{tikzpicture}
   \caption{}
   \label{fig:1} 
\end{subfigure}
\begin{subfigure}[b]{\textwidth}
\centering
       \begin{tikzpicture}[scale=0.8,every node/.style={scale=0.8}]
        \coordinate (a) at (0,0);
        \coordinate (b) at (9.1,6.3);
        \coordinate (c) at (2.1,0);
        \coordinate (d) at (0,2.1);
        \coordinate (e) at (9.1,4.2);
        \coordinate (f) at (7,6.3);
        \coordinate (I) at (-1,3.3);
        \coordinate (J) at (10.1,3.3);
        \fill[lightgray] (c)--(a)--(d)--(f)--(b)--(e)--cycle;
        \fill[gray] (f)--(b)--(e)--(7.6,3.3)--(2,3.3)--cycle;
        \draw (a) node {\tiny\textbullet};
        \draw (b) node {\tiny\textbullet};
        \draw (c) node {\tiny\textbullet};
        \draw (d) node {\tiny\textbullet};
        \draw (e) node {\tiny\textbullet};
        \draw (f) node {\tiny\textbullet};
        \draw (c) to (e);
        \draw (d) to (f);
        \draw (0,0) rectangle (9.1,6.3);
        \draw[dashed,thick] (I) to (J);
        \node  at (4.3,5) {\small$l_2$};
        \node  at (4.3,1) {\small$l_1$};
        \node  at (10.4,3.3) {\small$L_1$};
        \node  at (-1.5,3.3) {\small$x_2 = \alpha$};
        \node  at (2.2,1.7) {\small${Q}_1$};
        \node  at (7.1,4.8) {\small${Q}_2$};
        \dimline[line style = {line width=0.5},extension start length=-0.25, extension end length=-0.25,label style = {no slope}]{(11.5,0)}{(11.5,3.3)}{$1$};
    \end{tikzpicture}
   \caption{}
   \label{fig:2}
\end{subfigure}
\caption[]{\small Polyhedron $Q$ under the assumption $n_2\left(\dfrac{k}{n_3}-\dfrac{1}{n_k}\right)\geq k+1$}
\end{figure}
\ignore{
\begin{align*}
&l_1:n_2x_1-n_1x_2=\dfrac{kn_1-n_2}{k+1}& \text{(The lower slant edge of ${Q}$)}\\
&l_2:n_2x_1-n_1x_2=\dfrac{n_1-kn_2}{k+1}& \text{(The upper slant edge of ${Q}$)}\\
&L_1:x_2=\dfrac{n_2}{(k+1)n_k}+\dfrac{1}{k+1}& \text{(Line that is one unit above the bottom of ${Q}$)}\\
&L_2:x_2=\dfrac{n_2}{(k+1)n_k}+\dfrac{2n_2}{(k+1)n_1}-\dfrac{k}{k+1}& \text{(Bottom most line where ${Q}$ is at least a unit long)}\\
&L_3:x_2=\dfrac{kn_2}{(k+1)n_3}-\dfrac{2n_2}{(k+1)n_1}-\dfrac{1}{k+1}& \text{(Top most line where ${Q}$ is at most a unit long)}\\
&L_4:x_2=\dfrac{1}{k+1}\left(\dfrac{n_2}{n_k}-1\right)& \text{\Big{(}Line that is $\dfrac{k-1}{k+1}$ units above the bottom of ${Q}$\Big{)}}\\
&L_5:x_2=\dfrac{k}{k+1}\left(\dfrac{n_2}{n_3}-1\right)& \text{\Big{(}Line that is $\dfrac{k-1}{k+1}$ units below the top of ${Q}$\Big{)}}\\
&L_6:x_1=\dfrac{n_1}{(k+1)n_k}+\dfrac{1}{k+1}&\text{(Line that is a unit right of the left edge of ${Q}$)}\\
&L_7:x_1=\dfrac{n_1}{(k+1)n_k}& \text{\Big{(}Line that is $\dfrac{k-1}{k+1}$ units right of the left edge of ${Q}$\Big{)}}
\end{align*}}
\begin{lem}\label{lem:lengthandwidth}
If $\n$ satisfies $n_2\left(\dfrac{k}{n_3}-\dfrac{1}{n_k}\right)\geq k+1$, then $w_{Q}(\mathbf{e}_1) \geq 1$ and $w_{Q}(\mathbf{e}_2) \geq 1$.
\end{lem}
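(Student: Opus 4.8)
\textit{Proof proposal.} The plan is to bound each width from below by exhibiting two explicit points of $Q$ whose relevant coordinates are far apart. Recall $w_{Q}(\mathbf{e}_i)=\max_{\x\in Q}x_i-\min_{\x\in Q}x_i$, so that if $\p,\q\in Q$ then $w_{Q}(\mathbf{e}_i)\ge p_i-q_i$. I would take $\p$ and $\q$ to be the two ``opposite corners'' cut out by the box inequalities $\tfrac{n_j}{(k+1)n_k}-\tfrac{k}{k+1}\le x_j\le \tfrac{kn_j}{(k+1)n_3}-\tfrac{1}{k+1}$ ($j=1,2$) of $Q$, namely
\[
\p:=\Bigl(\tfrac{kn_1}{(k+1)n_3}-\tfrac{1}{k+1},\ \tfrac{kn_2}{(k+1)n_3}-\tfrac{1}{k+1}\Bigr),\qquad
\q:=\Bigl(\tfrac{n_1}{(k+1)n_k}-\tfrac{k}{k+1},\ \tfrac{n_2}{(k+1)n_k}-\tfrac{k}{k+1}\Bigr),
\]
i.e.\ the top-right and bottom-left vertices of $Q$ in Figure~\ref{fig:1}.

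First I would check that $\p,\q\in Q$ by testing the three defining double-inequalities of $Q$. For the two box inequalities in $x_j$, one side holds with equality by construction and the other rearranges, after clearing denominators, to $\tfrac{n_j}{k+1}\bigl(\tfrac{k}{n_3}-\tfrac{1}{n_k}\bigr)+\tfrac{k-1}{k+1}\ge 0$; here I would use that the hypothesis forces $\tfrac{k}{n_3}-\tfrac{1}{n_k}>0$ (since $n_2>0$), together with $k\ge 4$. For the slanted inequality $\tfrac{n_1-kn_2}{k+1}\le n_2x_1-n_1x_2\le\tfrac{kn_1-n_2}{k+1}$, a direct substitution gives $n_2p_1-n_1p_2=\tfrac{n_1-n_2}{k+1}$ and $n_2q_1-n_1q_2=\tfrac{k(n_1-n_2)}{k+1}$, and both of these lie in $\bigl[\tfrac{n_1-kn_2}{k+1},\tfrac{kn_1-n_2}{k+1}\bigr]$ because $n_1\le kn_1$, $n_2\le kn_2$ and $n_1\ge n_2\ge 0$.

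Then I would compute, for $i\in\{1,2\}$,
\[
w_{Q}(\mathbf{e}_i)\ \ge\ p_i-q_i\ =\ \frac{kn_i}{(k+1)n_3}-\frac{n_i}{(k+1)n_k}+\frac{k-1}{k+1}\ =\ \frac{n_i}{k+1}\Bigl(\frac{k}{n_3}-\frac{1}{n_k}\Bigr)+\frac{k-1}{k+1}.
\]
For $i=2$ the hypothesis says the first summand is at least $1$, so $w_{Q}(\mathbf{e}_2)\ge 1+\tfrac{k-1}{k+1}>1$. For $i=1$, since $n_1\ge n_2$ and $\tfrac{k}{n_3}-\tfrac{1}{n_k}>0$, the first summand is at least its value for $i=2$, so again $w_{Q}(\mathbf{e}_1)\ge 1$, completing the proof.

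I do not expect a genuine obstacle here: the only real idea is to recognize that these two box-corner points are feasible for $Q$, which is exactly where the ordering of the speeds and the positivity $\tfrac{k}{n_3}>\tfrac{1}{n_k}$ enter, after which the bound is a one-line computation. An alternative route would be to first argue that under the hypothesis $Q$ is genuinely the hexagon of Figure~\ref{fig:1}, so that $\max_{\x\in Q}x_i$ and $\min_{\x\in Q}x_i$ are attained on the facets coming from the box inequalities and equal their right-hand/left-hand sides; but this needs the same feasibility checks and is no shorter.
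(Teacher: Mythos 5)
Your proof is correct and in substance identical to the paper's: both reduce to the computation
\[
\frac{n_i}{k+1}\Bigl(\frac{k}{n_3}-\frac{1}{n_k}\Bigr)+\frac{k-1}{k+1},
\]
using the hypothesis for $i=2$ and then $n_1\ge n_2$ for $i=1$. The only difference is one of rigor in your favor: the paper asserts that the lines $x_2 = \tfrac{kn_2}{(k+1)n_3}-\tfrac{1}{k+1}$ and $x_2 = \tfrac{n_2}{(k+1)n_k}-\tfrac{k}{k+1}$ (and the analogous $x_1$-lines) are facets of $Q$ and reads the width off as the distance between them, whereas you actually exhibit two points $\p,\q$ and verify that they satisfy all three double inequalities defining $Q$, which is the missing step that justifies treating those bounds as attained. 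One small slip: you cite $k\ge 4$, but that is not a hypothesis of this lemma (it belongs to \Cref{thm:1} and \Cref{lem:length}); happily you never actually use it, since $k\ge 1$ together with $\tfrac{k}{n_3}-\tfrac{1}{n_k}>0$ (forced by the hypothesis and $n_2>0$) already makes your feasibility checks go through.
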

\begin{proof}
The lines $x_2=\zeta+\dfrac{k-1}{k+1}$ and $x_2=\delta-\dfrac{k-1}{k+1}$ represent diametrically opposite facets (top and bottom edges respectively) of ${Q}$. Then, $w_{Q}(\mathbf{e}_2)$ is the distance between these facets of ${Q}$.
\begin{align}
    w_{Q}(\mathbf{e}_2) & = \left(\dfrac{kn_2}{(k+1)n_3}-\dfrac{1}{k+1}\right)-\left(\dfrac{n_2}{(k+1)n_k}-\dfrac{k}{k+1}\right)\nonumber\\
      & =\dfrac{n_2}{k+1}\left(\dfrac{k}{n_3}-\dfrac{1}{n_k}\right)+\dfrac{k-1}{k+1}\label{ineq:wQe2}\\
      & \geq \dfrac{k+1}{k+1}+\dfrac{k-1}{k+1}\nonumber\\
      & \geq 1\nonumber
\end{align}
where the first inequality follows from our assumption and the latter follows from $k\geq 1$. The lines $x_1=\dfrac{kn_1}{(k+1)n_3}-\dfrac{1}{k+1}$ and $x_1=\dfrac{n_1}{(k+1)n_k}-\dfrac{k}{k+1}$ represent diametrically opposite facets (right and left edges respectively) of ${Q}$. $w_{Q}(\mathbf{e}_1)$ is the distance between these facets and it can be expressed as
\begin{align*}
    w_{Q}(\mathbf{e}_1) & = \left(\dfrac{kn_1}{(k+1)n_3}-\dfrac{1}{k+1}\right)-\left(\dfrac{n_1}{(k+1)n_k}-\dfrac{k}{k+1}\right)\\
      & = \dfrac{n_1}{k+1}\left(\dfrac{k}{n_3}-\dfrac{1}{n_k}\right)+\dfrac{k-1}{k+1}\\
      & \geq \dfrac{n_2}{k+1}\left(\dfrac{k}{n_3}-\dfrac{1}{n_k}\right)+\dfrac{k-1}{k+1}\\
      & = w_{Q}(\mathbf{e}_2)\\
      & \geq 1
\end{align*}
where the first inequality follows from $n_1\geq n_2$, the following equality from \eqref{ineq:wQe2} and the last inequality from $w_{Q}(\mathbf{e}_2)\geq 1$. The result follows.
\end{proof}
\Cref{lem:lengthandwidth} suggests that ${Q}_1 \define {Q}\cap\left\{\mathbf{x}\in\mathbb{R}^2:x_2\leq \alpha\right\} \neq \emptyset$ and ${Q}_2 \define {Q}\cap\left\{\mathbf{x}\in\mathbb{R}^2:x_2\geq \alpha\right\} \neq \emptyset$. Consider ${Q}_3:={Q}_2-\mathbf{e}_2$ and ${Q}_4:={Q}_1\cup{Q}_3$. Observe that, by definition, $w_{{Q}_1}(\mathbf{e}_2) = 1$ (\Cref{fig:2}).
\begin{lem}\label{lem:width}
If $\n$ satisfies $n_2\left(\dfrac{k}{n_3}-\dfrac{1}{n_k}\right)\geq k+1$, then $w_{{Q}_3}(\mathbf{e}_2) \geq \dfrac{k-1}{k+1}$.
\end{lem}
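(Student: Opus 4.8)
The idea is that width along a coordinate direction is unchanged by translation, so $w_{{Q}_3}(\mathbf{e}_2)=w_{{Q}_2}(\mathbf{e}_2)$, and the latter can be read off directly from the defining inequalities of ${Q}$. Concretely, for any ${S}\subseteq\R^2$ and $\mathbf{v}\in\R^2$ we have $w_{{S}-\mathbf{v}}(\mathbf{e}_2)=w_{{S}}(\mathbf{e}_2)$, since subtracting $\mathbf{v}$ shifts both $\max_{\x\in{S}}x_2$ and $\min_{\x\in{S}}x_2$ by the same amount $v_2$. Applying this with ${S}={Q}_2$ reduces the claim to showing $w_{{Q}_2}(\mathbf{e}_2)\geq\dfrac{k-1}{k+1}$.

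Next I would pin down the two extreme values of $x_2$ on ${Q}_2$. The maximum of $x_2$ over all of ${Q}$ equals $\dfrac{kn_2}{(k+1)n_3}-\dfrac{1}{k+1}$ (the top facet, i.e.\ the line $x_2=\zeta+\frac{k-1}{k+1}$), and the minimum equals $\dfrac{n_2}{(k+1)n_k}-\dfrac{k}{k+1}$ (the bottom facet). The hypothesis gives $n_2\!\left(\dfrac{k}{n_3}-\dfrac{1}{n_k}\right)\geq k+1\geq 2$, which rearranges to $\dfrac{kn_2}{(k+1)n_3}-\dfrac{1}{k+1}\geq \dfrac{n_2}{(k+1)n_k}+\dfrac{1}{k+1}=\alpha$, so the top facet of ${Q}$ lies in the halfplane $x_2\geq\alpha$ and hence in ${Q}_2$; thus $\max_{\x\in{Q}_2}x_2=\dfrac{kn_2}{(k+1)n_3}-\dfrac{1}{k+1}$. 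On the other hand $\alpha$ lies strictly between the bottom value $\dfrac{n_2}{(k+1)n_k}-\dfrac{k}{k+1}$ and the top value of $x_2$ on ${Q}$, so by convexity (hence connectedness) of ${Q}$ the set ${Q}$ meets the line $x_2=\alpha$ — this is precisely the observation made when ${Q}_1,{Q}_2$ were introduced — and therefore $\min_{\x\in{Q}_2}x_2=\alpha$.

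Finally I would subtract the two values:
\[
w_{{Q}_2}(\mathbf{e}_2)=\left(\dfrac{kn_2}{(k+1)n_3}-\dfrac{1}{k+1}\right)-\alpha=\dfrac{n_2}{k+1}\left(\dfrac{k}{n_3}-\dfrac{1}{n_k}\right)-\dfrac{2}{k+1}\geq 1-\dfrac{2}{k+1}=\dfrac{k-1}{k+1},
\]
the inequality being exactly the hypothesis. Combined with $w_{{Q}_3}(\mathbf{e}_2)=w_{{Q}_2}(\mathbf{e}_2)$ from the first step, this gives the claim. There is no real obstacle here; the only point requiring care is the justification that the extreme values of $x_2$ on ${Q}_2$ are the stated facet values — that is, that $\alpha$ genuinely sits between the bottom and top facets of ${Q}$ — and this is where the hypothesis gets used (in its weak form $\geq 2$ to place $\alpha$ below the top facet, and in its full form $\geq k+1$ for the final estimate).
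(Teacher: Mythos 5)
Your proposal is correct and follows essentially the same route as the paper: reduce to $w_{{Q}_2}(\mathbf{e}_2)$ by translation invariance of directional width, identify the top facet of ${Q}$ and the line $x_2=\alpha$ as the extreme $x_2$-values over ${Q}_2$, and subtract to land exactly on the hypothesis. The only (minor) difference is that you spell out explicitly that $\alpha$ lies between the bottom and top facets of ${Q}$ — a check the paper leaves implicit after \Cref{lem:lengthandwidth} — which is a reasonable bit of extra care but not a different argument.
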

\begin{proof}
${Q}_3$ is a translate of ${Q}_2$\ignore{ and translation does not change the width of a polyhedron}. Thus, it suffices to show that $w_{{Q}_2}(\mathbf{e}_2) \geq \dfrac{k-1}{k+1}$\ignore{, which is the distance between $L_1$ and $x_2=\dfrac{kn_2}{(k+1)n_3}-\dfrac{1}{k+1}$, is at least $\dfrac{k-1}{k+1}$ units}. We have,
\begin{align*}
    w_{{Q}_{2}}(\mathbf{e}_2) & = \left(\zeta + \dfrac{k-1}{k+1}\right) - \alpha\\ 
    & = \left(\dfrac{kn_2}{(k+1)n_3}-\dfrac{1}{k+1}\right)-\left(\dfrac{n_2}{(k+1)n_k}+\dfrac{1}{k+1}\right)\\
      & = \dfrac{n_2}{k+1}\left(\dfrac{k}{n_3}-\dfrac{1}{n_k}\right)-\dfrac{2}{k+1}\\
      & \geq \dfrac{k+1}{k+1}-\dfrac{2}{k+1}\\
      & =\dfrac{k-1}{k+1}
\end{align*}
where the inequality follows from our assumption. The result follows.
\end{proof}
\begin{lem}\label{lem:line}
If $\n$ satisfies $2n_1\leq (k-1)n_2$ and $n_2\left(\dfrac{k}{n_3}-\dfrac{1}{n_k}\right)\geq k+1$, then
$$
    \{\mathbf{x}\in\mathbb{R}^2:x_2=a\}\cap{Q}_1\cap{Q}_3\neq\emptyset\hspace{5mm} \forall~~a\in[\alpha-1,\delta].
$$
\end{lem}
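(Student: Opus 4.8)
The plan is to prove the statement one horizontal slice at a time. Fix $a \in [\alpha - 1, \delta]$ and slice everything by the line $x_2 = a$. Since $\delta < \alpha$, this line meets $Q_1 = Q \cap \{x_2 \le \alpha\}$ in exactly the $x_1$-interval in which it meets $Q$; call that interval $I_a$. Since $a+1 \ge \alpha$, the same line meets $Q_3 = Q_2 - \mathbf{e}_2$ in exactly $I_{a+1}$, the $x_1$-interval in which the line $x_2 = a+1$ meets $Q$ — because once $a+1 \ge \alpha$, intersecting $Q$ with $\{x_2 \ge \alpha\}$ imposes nothing at height $a+1$. Thus $\{x_2=a\}\cap Q_1\cap Q_3 \neq \emptyset$ if and only if $I_a \cap I_{a+1} \neq \emptyset$, and it suffices to prove the latter for every $a \in [\alpha-1,\delta]$.

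For any real $t$, the interval $I_t$ is cut out by the four defining inequalities of $Q$ with $x_2=t$ substituted: the two facets of $Q$ parallel to the $x_2$-axis give $\ell \le x_1 \le r$ with $\ell = \tfrac{n_1}{(k+1)n_k}-\tfrac{k}{k+1}$ and $r = \tfrac{kn_1}{(k+1)n_3}-\tfrac{1}{k+1}$, while the slant pair $l_2,l_1$ gives $\lambda(t) \le x_1 \le \lambda(t)+s$, where $\lambda(t) = \tfrac{n_1}{n_2}t + \tfrac{n_1-kn_2}{(k+1)n_2}$ is the abscissa of $l_2$ at height $t$ and $s = \tfrac{(k-1)(n_1+n_2)}{(k+1)n_2}$ is the (height-independent) horizontal width of the slant strip. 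So $I_t = \bigl[\max\{\ell,\lambda(t)\},\ \min\{r,\lambda(t)+s\}\bigr]$ for every $t$. Writing $m = \tfrac{n_1}{n_2}>0$ and using that $\lambda$ is strictly increasing (so $\lambda(a+1)=\lambda(a)+m$ exceeds $\lambda(a)$), one gets $I_a\cap I_{a+1} = \bigl[\max\{\ell,\lambda(a)+m\},\ \min\{r,\lambda(a)+s\}\bigr]$, which is nonempty exactly when the four inequalities
\[
  \ell \le r, \qquad m \le s, \qquad \lambda(a) \ge \ell - s, \qquad \lambda(a) \le r - m
\]
all hold.

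It remains to check these four. The first, $\ell \le r$, is $w_Q(\mathbf{e}_1)\ge 1$ from \Cref{lem:lengthandwidth}. The second, $m \le s$, rearranges to $2n_1 \le (k-1)n_2$, the first hypothesis. For the remaining two, since $\lambda$ is increasing it suffices to check the third at $a=\alpha-1$ and the fourth at $a=\delta$; a direct substitution (using $\alpha-1 = \tfrac{n_2}{(k+1)n_k}-\tfrac{k}{k+1}$ and $\delta = \tfrac{n_2}{(k+1)n_k}-\tfrac{1}{k+1}$) gives $\lambda(\alpha-1) = \ell - \tfrac{(k-1)n_1}{(k+1)n_2}$ and $\lambda(\delta) = \ell$. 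The third inequality at $a=\alpha-1$ then reads $\tfrac{(k-1)n_1}{(k+1)n_2}\le s$, immediate since $s = \tfrac{(k-1)n_1}{(k+1)n_2}+\tfrac{k-1}{k+1}$; and the fourth at $a=\delta$ reads $m \le r-\ell = w_Q(\mathbf{e}_1)$, which follows from the second hypothesis because $n_2\bigl(\tfrac{k}{n_3}-\tfrac{1}{n_k}\bigr)\ge k+1$ forces $w_Q(\mathbf{e}_1) = \tfrac{n_1}{k+1}\bigl(\tfrac{k}{n_3}-\tfrac{1}{n_k}\bigr)+\tfrac{k-1}{k+1} \ge m + \tfrac{k-1}{k+1}$.

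The only substantive inequality is $m \le s$: it says that moving up by a unit slides the slant strip of $Q$ horizontally by at most its own width, so two unit-separated slices still overlap — and this is precisely what $2n_1\le(k-1)n_2$ encodes. Everything else is either already recorded in \Cref{lem:lengthandwidth} or is the routine bookkeeping of translating $Q_2$ down by $\mathbf{e}_2$ and evaluating the linear function $\lambda$ at the two endpoints $\alpha-1$ and $\delta$; I anticipate no real obstacle beyond care with the arithmetic. (Downstream, this lemma feeds \Cref{lem:convexity} with $S_1 = Q_1$, $S_2 = Q_3$ and the strip $\alpha-1 \le x_2 \le \delta$.)
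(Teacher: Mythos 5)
Your proof is correct and takes essentially the same approach as the paper: both arguments hinge on comparing, at a fixed height $a$, the right boundary of $Q_1$ (the line $l_1$) with the left boundary of $Q_3$ (the translate $l_2 - \mathbf{e}_2$), and in both the substantive inequality is $m \le s$, i.e.\ $2n_1 \le (k-1)n_2$. Where the paper exhibits a single witness point $B' = B - \mathbf{e}_2$ and verifies $B' \in Q_1$ via inequalities~(1) and~(2), you describe the slices as explicit intervals $I_a$, $I_{a+1}$ and reduce non-emptiness of their intersection to four scalar inequalities checked at the endpoints $a = \alpha - 1$ and $a = \delta$; this is the same computation organized slightly more systematically, and in particular it makes explicit the check that the slant boundary $l_1$ (rather than the vertical right edge of $Q$) is active for $a \le \delta$, a point the paper treats as immediate.
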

\begin{proof}
Consider $a\in[\alpha-1,\delta]$, and the line $x_2=a$. From \Cref{lem:width} and the fact that $w_{{Q}_{1}}(\mathbf{e}_2)=1$, we have $\{\mathbf{x}\in\mathbb{R}^2:x_2=a\}\cap{Q}_1\neq\emptyset$ and $\{\mathbf{x}\in\mathbb{R}^2:x_2=a\}\cap{Q}_3\neq\emptyset$.

Since $\alpha-1\leq a\leq \delta<\zeta$, the lines $x_2=a$ and $l_1$ will intersect. Let $A$ be the point of intersection of these lines. Specifically,
$$
A=\left(\dfrac{kn_1-n_2}{(k+1)n_2}+\dfrac{n_1a}{n_2},a\right)
$$
By definition, $A\in {Q}_1$. Additionally, any point on the line $x_2 = a$, such that $\kappa-1\leq x_1 \leq \dfrac{kn_1-n_2}{(k+1)n_2}+\dfrac{n_1a}{n_2}$ will be in ${Q}_1$ as well.

Let $B$ be the point of intersection of $x_2=a+1$ with $l_2$. Observe that $B \in {Q}_2$. In particular,
$$
B=\left(\dfrac{n_1-kn_2}{(k+1)n_2}+\dfrac{n_1(a+1)}{n_2},a+1\right)
$$
Since ${Q}_3 = {Q}_2-\mathbf{e}_2$, $\exists~~ B' \in {Q}_3$ such that $B' = B - \mathbf{e}_2$. In particular,
$$
B'=\left(\dfrac{n_1-kn_2}{(k+1)n_2}+\dfrac{n_1(a+1)}{n_2},a\right)
$$
As noted earlier, to prove $B' \in {Q}_1$ it suffices to show that 
$$
\kappa -1 \leq \dfrac{n_1-kn_2}{(k+1)n_2}+\dfrac{n_1(a+1)}{n_2} \leq \dfrac{kn_1-n_2}{(k+1)n_2}+\dfrac{n_1a}{n_2}.
$$
We have,
\begin{align}
    \left(\dfrac{n_1-kn_2}{(k+1)n_2}+\dfrac{n_1(a+1)}{n_2}\right)-\left(\dfrac{kn_1-n_2}{(k+1)n_2}+\dfrac{n_1a}{n_2}\right) & = \dfrac{n_1}{n_2}\left(\dfrac{2}{k+1}\right)-\dfrac{k-1}{k+1}\nonumber\\
    & = \dfrac{1}{k+1}\left(\dfrac{2n_1}{n_2}-(k-1)\right)\nonumber\\
    & \leq 0 \label{ineq:1}
\end{align}
where the last inequality follows from our assumption. Additionally, note that
\begin{align}
    \left(\dfrac{n_1-kn_2}{(k+1)n_2}+\dfrac{n_1(a+1)}{n_2}\right)-(\kappa-1) & = \left(\dfrac{n_1-kn_2}{(k+1)n_2}+\dfrac{n_1(a+1)}{n_2}\right)-\left(\dfrac{n_1}{(k+1)n_k}-\dfrac{k}{k+1}\right)\nonumber\\
    & \geq \left(\dfrac{n_1-kn_2}{(k+1)n_2}+\dfrac{n_1\alpha}{n_2}\right)-\left(\dfrac{n_1}{(k+1)n_k}-\dfrac{k}{k+1}\right)\nonumber\\
    & = \left(\dfrac{n_1}{(k+1)n_2}+\dfrac{n_1}{(k+1)n_k} + \dfrac{n_1}{(k+1)n_2}\right)-\dfrac{n_1}{(k+1)n_k}\nonumber\\
    & = \dfrac{2n_1}{(k+1)n_2}\nonumber\\
    & > 0 \label{ineq:2}
\end{align}
where the second inequality is immediate from $\alpha -1 \leq a$ and the last inequality follows from the positivity of $n_1, n_2$ and $k$. From \eqref{ineq:1} and \eqref{ineq:2} it follows that $B' \in {Q}_1$. Combining with $B' \in \{\mathbf{x}\in\mathbb{R}^2:x_2=a\}$ and $B' \in {Q}_3$ yields the result.
\ignore{
Note that the points $A$ and $B'$ lie on the line $x_2=a$ and $A$ lies in ${Q}_1$. Since $d\leq 0$, we have $B' \in {Q}_1$. Therefore, $\{\mathbf{x}\in\mathbb{R}^2:x_2=a\}\cap{Q}_1\cap{Q}_3=\overline{B'A}\neq\emptyset$.}
\end{proof}

\begin{lem}\label{lem:length}
If $\n$ satisfies $k\geq 3$, $2n_1\leq (k-1)n_2$ and $n_2\left(\dfrac{k}{n_3}-\dfrac{1}{n_k}\right)\geq k+1$, then 
$$
w_{\set{\x\in \R^2}{x_2 = a} \cap {Q}_4}(\mathbf{e}_1) \geq 1~~~~~~\forall~~a \in[\alpha -1,\alpha].
$$
\end{lem}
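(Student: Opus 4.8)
The plan is to cut $Q_4=Q_1\cup Q_3$ by each horizontal line $x_2=a$ with $a\in[\alpha-1,\alpha]$ and to exhibit a closed segment of length at least $1$ inside the slice. For $i\in\{1,2\}$ let $\ell_i(a)$ denote the unique $x_1$ with $(\ell_i(a),a)\in l_i$, so that $\ell_1(a)=\frac{kn_1-n_2}{(k+1)n_2}+\frac{n_1a}{n_2}$ and $\ell_2(a)=\frac{n_1-kn_2}{(k+1)n_2}+\frac{n_1a}{n_2}$ are affine and strictly increasing in $a$, with $\ell_1(a)-\ell_2(a)=\frac{(k-1)(n_1+n_2)}{(k+1)n_2}$ for all $a$ (since $l_1\parallel l_2$). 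Write $r_1=\frac{kn_1}{(k+1)n_3}-\frac{1}{k+1}$ for the right facet of $Q$; recall $\kappa-1=\frac{n_1}{(k+1)n_k}-\frac{k}{k+1}$ is its left facet, and $Q$ has $x_2$-range $\bigl[\alpha-1,\zeta+\frac{k-1}{k+1}\bigr]$.

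The first step is to describe $Q$ on the strip $\alpha-1\le x_2\le\zeta+\frac{k-1}{k+1}$. A short computation gives $\ell_2(\delta)=\kappa-1$ and $\ell_1(\zeta)=r_1$: the vertex of $Q$ where $l_2$ leaves the left facet lies at height exactly $\delta$, and the vertex where $l_1$ meets the right facet lies at height exactly $\zeta$. Since $\delta=\alpha-\frac{2}{k+1}\le\alpha$, and since the hypothesis $n_2\bigl(\frac{k}{n_3}-\frac{1}{n_k}\bigr)\ge k+1$ rearranges to exactly $\alpha\le\zeta$, we obtain $\alpha-1\le\delta\le\alpha\le\zeta$. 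Hence: (i) for $\delta\le a\le\zeta+\frac{k-1}{k+1}$, $Q\cap\{x_2=a\}=[\ell_2(a),\min(\ell_1(a),r_1)]$; (ii) for $\alpha-1\le a\le\delta$, $Q\cap\{x_2=a\}=[\kappa-1,\ell_1(a)]$, its left end having slid down onto the left facet; (iii) for $a\le\alpha$, $Q_1\cap\{x_2=a\}=Q\cap\{x_2=a\}$, while $Q_3\cap\{x_2=a\}$ is the horizontal translate of $Q\cap\{x_2=a+1\}$.

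Then I would split $[\alpha-1,\alpha]=[\delta,\alpha]\cup[\alpha-1,\delta]$. For $a\in[\delta,\alpha]$, since $\alpha\le\zeta$, items (i) and (iii) give $Q_1\cap\{x_2=a\}=[\ell_2(a),\ell_1(a)]$, of length $\frac{(k-1)(n_1+n_2)}{(k+1)n_2}$, which is $\ge1$ because $(k-1)n_1\ge2n_1\ge2n_2$ (using $k\ge3$ and $n_1\ge n_2$); as $Q_1\subseteq Q_4$ this case is done. For $a\in[\alpha-1,\delta]$, items (ii) and (iii) give $Q_1\cap\{x_2=a\}=[\kappa-1,\ell_1(a)]$, and since $a+1\in[\alpha,\delta+1]\subseteq[\delta,\zeta+\frac{k-1}{k+1}]$ they give $Q_3\cap\{x_2=a\}=[\ell_2(a+1),\min(\ell_1(a+1),r_1)]$. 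By \Cref{lem:line} these two slices intersect; moreover $\ell_2(a+1)\ge\ell_2(\delta)=\kappa-1$, $\ell_2(a+1)\le\ell_1(a)$ (this is exactly \eqref{ineq:1}, using $2n_1\le(k-1)n_2$), and $\min(\ell_1(a+1),r_1)\ge\ell_1(a)$ (since $\ell_1(a+1)=\ell_1(a)+\frac{n_1}{n_2}$ and $r_1\ge\ell_1(a)$ as $a\le\zeta$). So their union is the single segment $[\kappa-1,\min(\ell_1(a+1),r_1)]$, whose length equals $\bigl(\ell_1(a)-(\kappa-1)\bigr)+\frac{n_1}{n_2}\ge1$ if $\ell_1(a+1)\le r_1$ (using $\ell_1(a)\ge\kappa-1$ and $n_1\ge n_2$), and equals $r_1-(\kappa-1)=w_Q(\mathbf{e}_1)\ge1$ otherwise, by \Cref{lem:lengthandwidth}. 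This covers every $a\in[\alpha-1,\alpha]$.

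The hardest part should be the normal-form step: pinning down that the two vertices of $Q$ lie at heights exactly $\delta$ and $\zeta$, so that $Q$'s left boundary switches from the facet $x_1=\kappa-1$ to the slant $l_2$ precisely at $x_2=\delta$, and symmetrically on the right at $x_2=\zeta$. After that, everything reduces to the one-line length comparisons above, invoking only $k\ge3$, $n_1\ge n_2$, $2n_1\le(k-1)n_2$, the width hypothesis, \Cref{lem:line}, and \Cref{lem:lengthandwidth}.
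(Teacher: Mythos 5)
Your proof is correct, and it takes a genuinely different route for the harder interval $a\in[\alpha-1,\delta]$. The paper also begins by handling $a\in[\delta,\alpha]$ via the horizontal gap $\ell_1(a)-\ell_2(a)=\frac{(k-1)(n_1+n_2)}{(k+1)n_2}\geq 1$, exactly as you do. But for $a\in[\alpha-1,\delta]$ the paper does \emph{not} compute the slices pointwise: it invokes \Cref{lem:line} together with \Cref{lem:convexity} to conclude that ${Q}_4\cap\set{\x\in\R^2}{\alpha-1\le x_2\le\delta}$ is convex, computes the $\mathbf{e}_1$-width only at the two boundary heights $x_2=\alpha-1$ and $x_2=\delta$ (splitting the latter into sub-cases according to whether $l_1'$ reaches $x_2=\delta$, i.e., whether $\zeta-\alpha\ge\frac{k-1}{k+1}$), and then interpolates by convexity. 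You instead identify the two relevant vertices of ${Q}$ via $\ell_2(\delta)=\kappa-1$ and $\ell_1(\zeta)=r_1$, describe every slice ${Q}\cap\set{\x}{x_2=a}$ explicitly, and show directly that for each $a\in[\alpha-1,\delta]$ the union $({Q}_1\cup{Q}_3)\cap\set{\x}{x_2=a}$ is the single segment $\left[\kappa-1,\min(\ell_1(a+1),r_1)\right]$ of length at least $1$. Your route makes \Cref{lem:convexity} unnecessary for this lemma, and your reference to \Cref{lem:line} is in fact redundant, since the overlap $\ell_2(a+1)\le\ell_1(a)$ already follows from your own invocation of \eqref{ineq:1} (i.e., from $2n_1\le(k-1)n_2$). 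The trade-off is that you carry a slightly heavier normal-form computation up front (pinning the vertices of ${Q}$ at heights exactly $\delta$ and $\zeta$), whereas the paper only needs the boundary slices. One small slip in wording: ${Q}_3\cap\set{\x}{x_2=a}$ is the \emph{vertical} translate by $-\mathbf{e}_2$ of ${Q}_2\cap\set{\x}{x_2=a+1}$, not a horizontal translate; your formulas treat it correctly, so this does not affect the argument.
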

\begin{proof}
First, consider $a \in[\delta,\alpha]$. Then, $w_{\set{\x\in \R^2}{x_2 = a} \cap {Q}_4}(\mathbf{e}_1)$ is at least as much as the distance between $l_1$ and $l_2$ in the $x_1$-direction.
\begin{align}
    w_{\set{\x\in \R^2}{x_2 = a} \cap {Q}_4}(\mathbf{e}_1) & = \dfrac{1}{n_2}\left(\dfrac{kn_1-n_2}{k+1}-\dfrac{n_1-kn_2}{k+1}\right)\nonumber\\
      & = \dfrac{k-1}{k+1}\dfrac{n_1+n_2}{n_2}\nonumber\\
      & \geq  2\dfrac{k-1}{k+1}\nonumber\\
      & \geq 1.\label{ineq-lem:13-1}
\end{align}
where the first inequality follows from $n_1\geq n_2$ while the latter follows from $k\geq 3$.

Now, consider $a\in[\alpha-1,\delta]$. Since ${Q}_1$ and ${Q}_3$ are convex, as a result of \Cref{lem:line} and \Cref{lem:convexity}, ${Q}_4\cap\{\mathbf{x}\in\mathbb{R}^2:\alpha-1\leq x_2\leq\delta\}$ is convex.

Since $l_1$ is a facet of ${Q}_2$ and ${Q}_3={Q}_2-\mathbf{e}_2$, it follows that $l_1':=l_1-\mathbf{e}_2$ is a facet of ${Q}_3$. In particular, we have $l_1':n_2x_1-n_1(x_2+1)=\dfrac{kn_1-n_2}{k+1}$.

Let $C$ be the point of intersection of $l_1'$ and $x_2=\alpha-1$. Then:
\begin{center}
    $C=\left(\dfrac{n_1}{n_2}+\dfrac{n_1}{(k+1)n_k}-\dfrac{1}{k+1},\dfrac{n_2}{(k+1)n_k}-\dfrac{k}{k+1}\right)$
\end{center}
It follows that $\set{\x\in \R^2}{x_2 = \alpha-1} \cap {Q}_4 = [(\kappa-1,\alpha-1), C]$. We, thus, have
\begin{align}
    w_{\set{\x\in \R^2}{x_2 = \alpha-1} \cap {Q}_4}(\mathbf{e}_1) & = \left(\dfrac{n_1}{n_2}+\dfrac{n_1}{(k+1)n_k}-\dfrac{1}{k+1}\right)-\left(\dfrac{n_1}{(k+1)n_k}-\dfrac{k}{k+1}\right)\nonumber\\
      & = \dfrac{n_1}{n_2}+\dfrac{k-1}{k+1}\nonumber\\
      & > 1.\label{ineq-lem:13-2}
\end{align}
where the inequality follows from $n_1\geq n_2$ and $\dfrac{k-1}{k+1}>0$.

In case $\zeta-\alpha\geq\dfrac{k-1}{k+1}$, $l_1'$ would intersect $x_2=\delta$. This point, say $D$, would be given by:
\begin{center}
    $D=\left(\dfrac{2kn_1}{(k+1)n_2}+\dfrac{n_1}{(k+1)n_k}-\dfrac{1}{k+1},\dfrac{1}{k+1}\left(\dfrac{n_2}{n_k}-1\right)\right)$
\end{center}
Consequently, $\set{\x\in \R^2}{x_2 = \delta} \cap {Q}_4 = [(\kappa-1,\delta), D]$. We, then, have:
\begin{align}
    w_{\set{\x\in \R^2}{x_2 = \delta} \cap {Q}_4}(\mathbf{e}_1) & = \left(\dfrac{2kn_1}{(k+1)n_2}+\dfrac{n_1}{(k+1)n_k}-\dfrac{1}{k+1}\right)-\left(\dfrac{n_1}{(k+1)n_k}-\dfrac{k}{k+1}\right)\nonumber\\
      & = \dfrac{2kn_1}{(k+1)n_2}+\dfrac{k-1}{k+1}\nonumber\\
      & > 1.\label{ineq-lem:13-3}
\end{align}
where the inequality follows from $n_1\geq n_2$ and $k\geq 3$. If $\zeta<\alpha+\dfrac{k-1}{k+1}$, then $l_1'$ would not intersect $x_2=\delta$. Consequently, we have 
$$
\set{\x\in \R^2}{x_2 = \delta} \cap {Q}_4=\left[(\kappa-1,\delta),\left(\dfrac{kn_1}{(k+1)n_3}-\dfrac{1}{k+1},\delta\right)\right].
$$
Combining this with \Cref{lem:lengthandwidth}, we have:
\begin{align}
    w_{\set{\x\in \R^2}{x_2 = \delta} \cap {Q}_4}(\mathbf{e}_1)=w_{{Q}_4}(\mathbf{e}_1)=w_{{Q}}(\mathbf{e}_1)>1\label{ineq-lem:13-4}
\end{align}
Since ${Q}_4\cap\{\mathbf{x}\in\mathbb{R}^2:\alpha-1\leq x_2\leq\delta\}$ is convex, it follows from \eqref{ineq-lem:13-2}, \eqref{ineq-lem:13-3} and \eqref{ineq-lem:13-4} that
\begin{align}
    w_{\set{\x\in \R^2}{x_2 = a} \cap {Q}_4}(\mathbf{e}_1)>1~~\forall a\in[\alpha-1,\delta]\label{ineq-lem:13-5}
\end{align}
\ignore{Now, consider $a\in\left[\dfrac{n_2}{(k+1)n_k}-\dfrac{k}{k+1},\dfrac{1}{k+1}\left(\dfrac{n_2}{n_k}-1\right)\right]$.
Now, consider $a\in [\alpha-1, \delta]$. Due to \cref{lem:line}, $\{\mathbf{x}\in\mathbb{R}^2:x_2=a\}\cap{Q}_1\cap{Q}_3\neq\emptyset~~\forall a\in [\alpha-1, \delta]$. For all $a$ in the above interval, ${Q}_4$ has a common facet $x_1=\dfrac{x_1}{(k+1)n_k}-\dfrac{k}{k+1}$. The translate of $l_1$ has a positive slope and the facet $x_1=\dfrac{kn_1}{(k+1)n_3}-\dfrac{1}{k+1}$ is parallel to the $x_2$-axis. Due to this, 
$$
w_{\set{\x\in \R^2}{x_2 = \alpha-1} \cap {Q}_4}(\mathbf{e}_1)\leq w_{\set{\x\in \R^2}{x_2 = a} \cap {Q}_4}(\mathbf{e}_1)~\forall a\in[\alpha-1,\delta].
$$ 
With this, it suffices to show that $w_{\set{\x\in \R^2}{x_2 = \alpha-1} \cap {Q}_4}(\mathbf{e}_1)\geq 1$.

Let $C$ be the point of intersection of $l_1$ and $L_1$. Upon translation of ${Q}_2$, $C$ would be translated to $C'$, given by:
$$
C'=\left(\dfrac{n_1}{n_2}+\dfrac{n_1}{(k+1)n_k}-\dfrac{1}{k+1},\dfrac{n_2}{(k+1)n_k}-\dfrac{k}{k+1}\right)
$$
Using this, we have:
\begin{align*}
    w_{\set{\x\in \R^2}{x_2 = \alpha-1} \cap {Q}_4}(\mathbf{e}_1) & = \left(\dfrac{n_1}{n_2}+\dfrac{n_1}{(k+1)n_k}-\dfrac{1}{k+1}\right)-\left(\dfrac{n_1}{(k+1)n_k}-\dfrac{k}{k+1}\right)\\
    & = \dfrac{n_1}{n_2} + \dfrac{k-1}{k+1}\\
    & > 1
\end{align*}}

The result follows from \eqref{ineq-lem:13-1} and \eqref{ineq-lem:13-5}. 
\end{proof}
\begin{lem}\label{lem:lines}
Define ${Q}_5 \define {Q}\cap\set{\x\in\R^2}{\beta \leq x_2 \leq \gamma}.$ If $\n$ satisfies $k\geq4$, $2n_1>(k-1)n_2$ and $n_2\left(\dfrac{k}{n_3}-\dfrac{1}{n_k}\right)\geq k+1$, then $w_{{Q}_5}(\mathbf{e}_2) > 1$.
\end{lem}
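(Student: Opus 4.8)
The plan is to notice that ${Q}_5$ is nothing but the horizontal slab of ${Q}$ carved out between the levels $x_2=\beta$ and $x_2=\gamma$, so that $w_{{Q}_5}(\mathbf{e}_2)$ equals $\gamma-\beta$ exactly, and then to check the elementary inequality $\gamma-\beta>1$. To justify the first point I would recall, from the facet description used in the proof of \Cref{lem:lengthandwidth}, that the bottom and top edges of ${Q}$ are $x_2=\dfrac{n_2}{(k+1)n_k}-\dfrac{k}{k+1}$ and $x_2=\dfrac{kn_2}{(k+1)n_3}-\dfrac{1}{k+1}$. Since $\beta$ exceeds the former by exactly $\dfrac{2n_2}{(k+1)n_1}>0$ and $\gamma$ falls short of the latter by the same positive amount, the cross-sections $\set{\x\in\R^2}{x_2=\beta}\cap{Q}$ and $\set{\x\in\R^2}{x_2=\gamma}\cap{Q}$ are nonempty as soon as we also know $\beta\le\gamma$; hence $w_{{Q}_5}(\mathbf{e}_2)=\gamma-\beta$, and it suffices to prove $\gamma-\beta>1$, which in passing certifies $\beta<\gamma$.

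Next I would carry out the one-line computation
\begin{align*}
\gamma-\beta=\dfrac{n_2}{k+1}\left(\dfrac{k}{n_3}-\dfrac{1}{n_k}\right)+\dfrac{k-1}{k+1}-\dfrac{4n_2}{(k+1)n_1}.
\end{align*}
By the hypothesis $n_2\left(\dfrac{k}{n_3}-\dfrac{1}{n_k}\right)\ge k+1$, the first summand is at least $1$, so the whole claim reduces to showing $\dfrac{k-1}{k+1}-\dfrac{4n_2}{(k+1)n_1}>0$, equivalently $(k-1)n_1>4n_2$.

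To obtain $(k-1)n_1>4n_2$ I would combine the two remaining hypotheses. From $2n_1>(k-1)n_2$ we get $n_1>\dfrac{(k-1)n_2}{2}$, while $k\ge4$ gives $(k-1)^2\ge9>8$, hence $\dfrac{(k-1)n_2}{2}>\dfrac{4n_2}{k-1}$; chaining these yields $n_1>\dfrac{4n_2}{k-1}$, that is $(k-1)n_1>4n_2$. Therefore $\gamma-\beta>1$, and the lemma follows. There is no genuine obstacle here: the only step requiring a moment of care is confirming that $w_{{Q}_5}(\mathbf{e}_2)$ really equals $\gamma-\beta$ rather than something smaller, which is why one first pins down the $x_2$-extent of ${Q}$ and checks that $\beta$ and $\gamma$ sit strictly inside it; the hypothesis $k\ge4$ enters precisely to guarantee $(k-1)^2\ge8$ in the final estimate.
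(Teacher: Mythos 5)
Your proposal is correct and follows essentially the same route as the paper: both identify $w_{{Q}_5}(\mathbf{e}_2)=\gamma-\beta$, expand the difference explicitly, and invoke the three hypotheses ($k\ge 4$, $2n_1>(k-1)n_2$, and $n_2\left(\tfrac{k}{n_3}-\tfrac{1}{n_k}\right)\ge k+1$) to conclude $\gamma-\beta>1$; the paper bounds $\tfrac{4n_2}{(k+1)n_1}$ by $\tfrac{8}{(k-1)(k+1)}$ first and then lands on $1+\tfrac{(k-1)^2-8}{(k-1)(k+1)}$, while you apply the main hypothesis first and reduce to $(k-1)n_1>4n_2$, but these are the same estimates applied in a slightly different order. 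Your preliminary remark verifying that $\beta$ and $\gamma$ sit strictly inside the $x_2$-range of ${Q}$ is a reasonable safeguard that the paper leaves implicit.
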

\begin{proof}
It is immediate from the definition of ${Q}_5$ that $w_{{Q}_5}(\mathbf{e}_2) = \gamma - \beta$.
\begin{alignat*}{2}
w_{{Q}_5}(\mathbf{e}_2) &=\left(\dfrac{kn_2}{(k+1)n_3}-\dfrac{2n_2}{(k+1)n_1}-\dfrac{1}{k+1}\right)-\left(\dfrac{n_2}{(k+1)n_k}+\dfrac{2n_2}{(k+1)n_1}-\dfrac{k}{k+1}\right)\\
    &=\dfrac{kn_2}{(k+1)n_3}-\dfrac{n_2}{(k+1)n_k}-\dfrac{4n_2}{(k+1)n_1}+\dfrac{k-1}{k+1}\\
    &>\dfrac{kn_2}{(k+1)n_3}-\dfrac{n_2}{(k+1)n_k}-\dfrac{8}{(k-1)(k+1)}+\dfrac{k-1}{k+1}\\
    &=\dfrac{kn_2}{(k+1)n_3}-\dfrac{n_2}{(k+1)n_k}-\dfrac{8}{(k-1)(k+1)}+1-\dfrac{2}{k+1}\\
    &=\dfrac{1}{k+1}\left(\dfrac{kn_2}{n_3}-\dfrac{n_2}{n_k}-2\right)+1-\dfrac{8}{(k-1)(k+1)}\\
    &\geq \dfrac{k-1}{k+1}+1-\dfrac{8}{(k-1)(k+1)}\\
    &=1+\dfrac{(k-1)^2-8}{(k-1)(k+1)}\\ 
    &> 1.
\end{alignat*}
where the first inequality follows from the assumption $2n_1>(k-1)n_2$, the second follows from $n_2\left(\dfrac{k}{n_3}-\dfrac{1}{n_k}\right)\geq k+1$ and the final follows from $k\geq 4$. The result follows.
\end{proof}

\section{Proofs of Theorems~\ref{thm:1} and \ref{thm:2}}\label{sec:mainproofs}
\setcounter{mainthm}{4}
\begin{mainthm}
$\n$ is a lonely runner instance if it satisfies $k\geq 4$ and  $n_2\left(\dfrac{k}{n_3}-\dfrac{1}{n_k}\right)\geq k+1$.
\end{mainthm}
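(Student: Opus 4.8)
The plan is to reduce the statement to producing an integer point in the two-dimensional projection $Q=P_2(\mathbf{n})$. Since $n_2\bigl(\frac{k}{n_3}-\frac1{n_k}\bigr)\ge k+1>0$ forces $\frac{k}{n_3}>\frac1{n_k}$, that is $n_3<kn_k$, the reduction recorded at the start of Section~\ref{sec:proofs} applies: any $(p_1,p_2)\in Q\cap\Z^2$ lifts to $(p_1,p_2,0,\dots,0)\in P(\mathbf{n})\cap\Z^k$, after which \cite{polyhedra2019matthias} gives the conclusion. Because $k\ge4$, every lemma of Section~\ref{sec:proofs} is available, and I would split the argument on the sign of $2n_1-(k-1)n_2$ --- precisely the dichotomy separating Lemma~\ref{lem:lines} from Lemmas~\ref{lem:line}--\ref{lem:length}. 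The two cases are clearly exhaustive.

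When $2n_1>(k-1)n_2$, I would work with $Q_5=Q\cap\{\beta\le x_2\le\gamma\}$. Lemma~\ref{lem:lines} gives $w_{Q_5}(\mathbf{e}_2)=\gamma-\beta>1$, so the closed interval $[\beta,\gamma]$ contains an integer $a^{\ast}$. A short computation with the facet inequalities of $Q$ (using $w_Q(\mathbf{e}_1)\ge1$ from Lemma~\ref{lem:lengthandwidth} to see which facets bind) shows that the horizontal slice $\{x_2=a\}\cap Q$ has length exactly $1$ at $a=\beta$ (left edge $x_1=\kappa-1$, right edge $l_1$) and at $a=\gamma$ (left edge $l_2$, right edge $x_1=\frac{kn_1}{(k+1)n_3}-\frac1{k+1}$); since $Q$ is convex, the slice-length $a\mapsto w_{\{x_2=a\}\cap Q}(\mathbf{e}_1)$ is concave in $a$, hence is $\ge1$ throughout $[\beta,\gamma]$, in particular at $a^{\ast}$. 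A horizontal segment of length at least $1$ contains a point with integer first coordinate, which gives the desired point of $Q\cap\Z^2$.

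When $2n_1\le(k-1)n_2$, I would instead use $Q_4=Q_1\cup Q_3$, with $Q_1=Q\cap\{x_2\le\alpha\}$ and $Q_3=(Q\cap\{x_2\ge\alpha\})-\mathbf{e}_2$. The interval $[\alpha-1,\alpha]$ has length $1$, so it contains an integer $a^{\ast}$. Lemma~\ref{lem:length} gives $w_{\{x_2=a^{\ast}\}\cap Q_4}(\mathbf{e}_1)\ge1$, and Lemma~\ref{lem:line} together with Lemma~\ref{lem:convexity} (on the sub-range $x_2\in[\alpha-1,\delta]$), plus the slant-edge comparison powered by $2n_1\le(k-1)n_2$ as in the proof of Lemma~\ref{lem:length} (on the sub-range $x_2\in[\delta,\alpha]$), show that $\{x_2=a^{\ast}\}\cap Q_4$ is a single segment; it therefore contains a point $(b^{\ast},a^{\ast})$ with $b^{\ast}\in\Z$. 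If $(b^{\ast},a^{\ast})\in Q_1\subseteq Q$ we are done; otherwise $(b^{\ast},a^{\ast})\in Q_3$, so $(b^{\ast},a^{\ast})+\mathbf{e}_2=(b^{\ast},a^{\ast}+1)\in Q\cap\Z^2$. Either way $Q$ contains an integer point, which finishes the proof.

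The step I expect to be the main obstacle is the second case. Since $Q_4$ is only a union of two convex sets, ``$w(\mathbf{e}_1)\ge1$ along a horizontal line'' does not by itself produce a lattice point: one must also know that line's slice of $Q_4$ is connected, and this is exactly what Lemma~\ref{lem:line} (the pieces $Q_1$ and $Q_3$ meet on every horizontal line of the lower strip) and Lemma~\ref{lem:convexity} are built to supply --- and it is also what makes the hypothesis $2n_1\le(k-1)n_2$ unavoidable there. The first case is comparatively mechanical once Lemma~\ref{lem:lines} and the concavity of the slice-width of the convex polygon $Q$ are in hand.
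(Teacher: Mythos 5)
Your proposal is correct and follows essentially the same route as the paper: reduce to the projection $Q=P_2(\mathbf{n})$ via $n_3<kn_k$, split on the sign of $2n_1-(k-1)n_2$, and extract a lattice point from the wide strip $Q_5$ in one case and from the shifted-and-glued set $Q_4=Q_1\cup Q_3$ in the other. You are somewhat more explicit than the paper on two points the paper leaves implicit --- the concavity of the slice-width of the convex polygon $Q$ that underlies the claim $w_{\{x_2=a\}\cap Q_5}(\mathbf{e}_1)\ge 1$, and the connectedness of the horizontal slices of $Q_4$ (without which width alone would not produce a lattice point) --- and both clarifications are accurate.
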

\begin{proof}
Combining $n_2\left(\dfrac{k}{n_3}-\dfrac{1}{n_k}\right)\geq k+1$ with the positivity of $n_2, n_k$ and $k$ immediately yields that $n_3 < kn_k$.
\ignore{\begin{align*}
    n_3 & \leq kn_k\left(\dfrac{1}{1+\dfrac{n_k}{n_2}(k+1)}\right)\\
    & < kn_k.
\end{align*}}
Thus, it suffices to show that ${Q} = {P}_2(\n)$ is not integer lattice-free. In particular, if there exists an integer point in ${Q}$, then $\n$ is a \emph{lonely runner instance}.

Assume that $2n_1\leq (k-1)n_2$. By definition of $L_1$, we have $w_{{Q}_4}(\mathbf{e}_2) = 1$. Thus, $\exists~~ a\in\Z$ such that $\set{\x \in \R^2}{x_2 = a} \cap {Q}_4 \neq \emptyset$. Lemma \ref{lem:length} suggests $w_{\set{\x \in \R^2}{x_2 = a} \cap {Q}_4}(\mathbf{e}_1) \geq 1$. It follows that there exists an integer point in ${Q}_4$ and consequently in ${Q}$.

Conversely, assume that $2n_1>(k-1)n_2$. \Cref{lem:lines} yields that $w_{{Q}_5}(\mathbf{e}_2) > 1$. Thus, $\exists~~ a\in\Z$ such that $\set{\x \in \R^2}{x_2 = a} \cap {Q}_5 \neq \emptyset$. Furthermore, by definition of $L_2$, $L_3$ and ${Q}_5$, we have $w_{\set{\x \in \R^2}{x_2 = a}\cap{Q}_5}(\mathbf{e}_1) \geq 1$. It follows that ${Q}_5$ and thus ${Q}$ contains an integer point.
\ignore{Let the region of ${Q}$ between $L_2$ and $L_3$ be ${Q}_5$. Due to \Cref{lem:lines}, $w_{{Q}_5}(\mathbf{e}_2)=1$. Thus, there is a horizontal lattice line, say $l$, through ${Q}_5$. By definition, along any horizontal line between $L_2$ and $L_3$, ${Q}_5$ is at least a unit long. Thus, the length would be at least a unit along $l$. Hence, ${Q}_5$ and thus ${Q}$, contains an integer point.}
\end{proof}

\begin{mainthm}
If $\n$ satisfies $n_2\leq kn_k$ and $n_1~\textit{{\normalfont mod}}((k+1)n_k)\in [n_k,kn_k]$, then it is a lonely runner instance.
\end{mainthm}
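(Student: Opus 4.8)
The plan is to exhibit an explicit integer point of $P(\mathbf{n})$, namely $\mathbf{p}=(q,0,0,\ldots,0)$ where $q:=\lfloor n_1/((k+1)n_k)\rfloor\in\mathbb{N}$; by the theorem of \citet{polyhedra2019matthias} recalled above, producing a point of $P(\mathbf{n})\cap\mathbb{Z}^k$ is exactly what is needed. Write $n_1=q(k+1)n_k+r$, so that $r=n_1\bmod((k+1)n_k)$ and, by hypothesis, $r\in[n_k,kn_k]$.

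Feeding $\mathbf{p}$ into the defining inequalities of $P(\mathbf{n})$ splits the verification into two families. For a pair $(i,j)$ with $2\le i<j\le k$ the relevant coordinates of $\mathbf{p}$ vanish, so the constraint $\tfrac{n_i-kn_j}{k+1}\le n_jx_i-n_ix_j\le\tfrac{kn_i-n_j}{k+1}$ collapses to $n_i\le kn_j$ and $n_j\le kn_i$. The second is immediate from $n_i\ge n_j$ and $k\ge1$; the first follows from $n_i\le n_2\le kn_k\le kn_j$, where the middle step is the hypothesis $n_2\le kn_k$ and the outer steps use the ordering of the $n_\ell$. For a pair $(1,j)$ with $j\in\{2,\ldots,k\}$, since $x_j=0$ and $x_1=q$, the constraint becomes $\tfrac{n_1}{(k+1)n_j}-\tfrac{k}{k+1}\le q\le\tfrac{kn_1}{(k+1)n_j}-\tfrac{1}{k+1}$.

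It remains to check this last two-sided bound simultaneously for all $j$, and this is the only step requiring care. The lower bound $\tfrac{n_1}{(k+1)n_j}-\tfrac{k}{k+1}$ is decreasing in $n_j$, so it is tightest at $n_j=n_k$; substituting $n_1=q(k+1)n_k+r$ there reduces the inequality to $\tfrac{r}{(k+1)n_k}\le\tfrac{k}{k+1}$, i.e. $r\le kn_k$, which is the upper end of the hypothesis on $r$. Dually, the upper bound $\tfrac{kn_1}{(k+1)n_j}-\tfrac{1}{k+1}$ is decreasing in $n_j$, so it is tightest at $n_j=n_2$; substituting $n_1=q(k+1)n_k+r$ gives $\tfrac{kn_1}{(k+1)n_2}=\tfrac{kn_k}{n_2}\,q+\tfrac{kr}{(k+1)n_2}$, and then $\tfrac{kn_k}{n_2}\ge1$ (from $n_2\le kn_k$, with $q\ge0$) together with $\tfrac{kr}{(k+1)n_2}\ge\tfrac{r}{(k+1)n_k}\ge\tfrac{1}{k+1}$ (from $n_2\le kn_k$ and $r\ge n_k$) yields $\tfrac{kn_1}{(k+1)n_2}\ge q+\tfrac{1}{k+1}$, the required upper bound on $q$. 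Thus $\mathbf{p}\in P(\mathbf{n})\cap\mathbb{Z}^k$ and $\mathbf{n}$ is a lonely runner instance.

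I do not expect a genuine obstacle: the whole content is in the last paragraph, where one must notice that among the inequalities coming from the pairs $(1,j)$ the binding lower bound occurs at the slowest auxiliary runner $n_k$ and the binding upper bound at the fastest one $n_2$, and then track signs correctly after the substitution $n_1=q(k+1)n_k+r$ — it is precisely here that both hypotheses $n_2\le kn_k$ and $n_1\bmod((k+1)n_k)\in[n_k,kn_k]$ are consumed.
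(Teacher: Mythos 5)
Your proposal is correct and follows essentially the same route as the paper: the paper, too, reduces to the one-dimensional interval $\left[\frac{n_1}{(k+1)n_k}-\frac{k}{k+1},\,\frac{kn_1}{(k+1)n_2}-\frac{1}{k+1}\right]$ (having observed once and for all that $n_2\le kn_k$ lets one lift an integer in this interval to $(p_1,0,\ldots,0)\in P(\mathbf{n})$), writes $n_1=a(k+1)n_k+b$ with $b\in[n_k,kn_k]$, and checks that $a$ lands in the interval exactly as you do. The only cosmetic difference is that you verify the constraints with $i,j\ge 2$ by hand and bound $q$ from above directly, whereas the paper appeals to its general projection remark and deduces the upper bound via the interval length $l\ge\frac{k-1}{k+1}$; the underlying computations are identical.
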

\begin{proof}
Since it has been assumed that $n_2\leq kn_k$, it suffices to show the existence of an integral point in the projection of ${P}(\n)$ on to the first coordinate, for the existence of an integral point in ${P}(\n)$. Consider the projection, ${P}_1(\n)$, of ${P}(\textbf{n})$ on $x_1$. In particular, ${P}_1(\n)$ can be described as,
$$
    \dfrac{n_1}{(k+1)n_k}-\dfrac{k}{k+1} \leq x_1 \leq \dfrac{kn_1}{(k+1)n_2}-\dfrac{1}{k+1}
$$
The length, $l$, of the interval is,
\begin{align}
    l & = \left(\dfrac{kn_1}{(k+1)n_2}-\dfrac{1}{k+1}\right)-\left(\dfrac{n_1}{(k+1)n_k}-\dfrac{k}{k+1}\right)\nonumber\\
      & = \dfrac{n_1}{k+1}\left(\dfrac{k}{n_2}-\dfrac{1}{n_k}\right)+\dfrac{k-1}{k+1}\nonumber\\
      & = \dfrac{n_1}{k+1}\dfrac{kn_k-n_2}{n_2n_k}+\dfrac{k-1}{k+1}\nonumber\\
      & \geq \dfrac{k-1}{k+1}. \label{ineq:thm6-1}
\end{align}
where the above inequality follows from $n_2\leq kn_k$ and positivity of $n_1$ and $k$.

Assume that $n_1=a(k+1)n_k+b$, where $a\in \mathbb{N}$ and $0\leq b<(k+1)n_k$. Specifically, $n_k\leq b\leq kn_k$ since $n_1~\textit{{\normalfont mod}}((k+1)n_k)\in [n_k,kn_k]$. With this assumption, we have
\begin{align}
    \dfrac{n_1}{(k+1)n_k}-\dfrac{k}{k+1} & = \dfrac{a(k+1)n_k+b}{(k+1)n_k}-\dfrac{k}{k+1}\nonumber\\
      & = a+\dfrac{b-kn_k}{(k+1)n_k}\nonumber\\
      & = a-1+\dfrac{b+n_k}{(k+1)n_k}\nonumber\\
      & \leq a-1+\dfrac{kn_k+n_k}{(k+1)n_k}\nonumber\\
      & = a. \label{ineq:thm6-2}
\end{align}
Additionally, using \eqref{ineq:thm6-1} we obtain
\begin{align}
    \dfrac{kn_1}{(k+1)n_2}-\dfrac{1}{k+1} & \geq \dfrac{n_1}{(k+1)n_k}-\dfrac{k}{k+1}+\dfrac{k-1}{k+1}\nonumber\\
      & = \dfrac{a(k+1)n_k+b}{(k+1)n_k}-\dfrac{k}{k+1}+\dfrac{k-1}{k+1}\nonumber\\
      & = a+\dfrac{b-kn_k}{(k+1)n_k} +\dfrac{k-1}{k+1}\nonumber\\
      & \geq a+\dfrac{n_k-kn_k}{(k+1)n_k} +\dfrac{k-1}{k+1}\nonumber\\
      & = a. \label{ineq:thm6-3}
\end{align}
From \eqref{ineq:thm6-2} and \eqref{ineq:thm6-3}, we have $\dfrac{n_1}{(k+1)n_k}-\dfrac{k}{k+1}\leq a\leq \dfrac{kn_1}{(k+1)n_2}-\dfrac{1}{k+1}$. The result follows.
\end{proof}

\ignore{It has been proven in \cite{polyhedra2019matthias} that the sets of speeds satisfying $n_1\leq kn_k$ are lonely runner instances. In the following, we provide an alternative proof for the same, by explicitly giving a time at which the conjecture is satisfied.} 

\section{Proofs of Theorems~\ref{thm:6} and~\ref{thm:7}}\label{sec:timeproofs}

\begin{mainthm}
$t=\dfrac{k}{(k+1)n_1}$ is a suitable time for $\n$ if and only if $\n$ satisfies $n_1\leq kn_k$.
\end{mainthm}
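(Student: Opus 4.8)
The plan is to unwind the definition of a suitable time directly. Recall that $t$ is suitable for $\mathbf{n}$ precisely when $\{tn_i\}\in\left[\tfrac{1}{k+1},\tfrac{k}{k+1}\right]$ for every $i\in[k]$. With $t=\tfrac{k}{(k+1)n_1}$ we have $tn_i=\tfrac{kn_i}{(k+1)n_1}$, and since the speeds are ordered so that $n_1$ is the largest, $0<tn_i\leq\tfrac{k}{k+1}<1$; hence the fractional-part operation is vacuous and $\{tn_i\}=\tfrac{kn_i}{(k+1)n_1}$. The upper bound $\tfrac{kn_i}{(k+1)n_1}\leq\tfrac{k}{k+1}$ thus holds automatically for all $i$ (with equality at $i=1$, the fastest runner landing exactly on the right endpoint), so the whole content of ``$t$ is suitable'' collapses to the lower bounds $\tfrac{kn_i}{(k+1)n_1}\geq\tfrac{1}{k+1}$, i.e.\ $n_1\leq kn_i$, for all $i\in[k]$.

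For the ``if'' direction I would assume $n_1\leq kn_k$. Since $n_i\geq n_k$ for every $i$, this gives $n_1\leq kn_k\leq kn_i$, so each of the lower bounds above is satisfied and $t$ is a suitable time. For the ``only if'' direction, assume $t$ is suitable and specialize the membership requirement to the slowest runner, $i=k$: then $\{tn_k\}=\tfrac{kn_k}{(k+1)n_1}\geq\tfrac{1}{k+1}$, which rearranges to exactly $n_1\leq kn_k$.

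I expect essentially no obstacle here: the only step requiring a word of justification is that $tn_i<1$ so that $\{tn_i\}=tn_i$, and this is immediate from the standing assumption $n_1\geq n_i$. The one point to keep explicit is the quantifier over $i$ — the ``if'' direction needs all $k$ lower bounds and invokes monotonicity of the speeds to reduce them to the single $i=k$ inequality, whereas the ``only if'' direction only extracts the constraint coming from $i=k$, which is precisely why $n_k$ (and not some other $n_i$) appears in the statement.
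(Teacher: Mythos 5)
Your proposal is correct and follows essentially the same route as the paper: note that $0<tn_i\leq\tfrac{k}{k+1}<1$ so the fractional part is the identity, observe that the upper bound on $\{tn_i\}$ is automatic from $n_i\leq n_1$, and reduce the suitability condition to the lower bounds $n_1\leq kn_i$, which by monotonicity of the speeds is equivalent to the single inequality $n_1\leq kn_k$. The only cosmetic difference is that you collapse the condition to lower bounds up front and argue both directions from there, while the paper verifies both bounds for the ``if'' direction and proves the ``only if'' direction by contrapositive (assuming $n_1>kn_k$ and checking $n_k$ violates the lower bound); the underlying computation is identical.
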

\begin{proof}
Assume that $n_1\leq kn_k$. Then for all $i\in [k]$
\begin{gather*}
    \dfrac{1}{k} \leq \dfrac{n_i}{n_1} \leq 1\\
    \dfrac{1}{k+1} \leq \dfrac{k}{(k+1)n_1}n_i \leq \dfrac{k}{k+1}\\
    \left\{\dfrac{1}{k+1}\right\} \leq \left\{\dfrac{k}{(k+1)n_1}n_i\right\} \leq \left\{\dfrac{k}{k+1}\right\}
\end{gather*}
Thus, $\dfrac{k}{(k+1)n_1}$ is a suitable time.\\

Conversely, assume that $n_1>kn_k$. We have,
\begin{gather*}
    0<\dfrac{k}{(k+1)n_1}n_k<\dfrac{1}{k+1}\\
    0<\left\{\dfrac{k}{(k+1)n_1}n_k\right\}<\left\{\dfrac{1}{k+1}\right\}
\end{gather*}
It follows that $\left\{\dfrac{k}{(k+1)n_1}n_k\right\}\notin \left[\dfrac{1}{k+1},\dfrac{k}{k+1}\right]$ and, consequently, $\dfrac{k}{(k+1)n_1}$ is not a suitable time.
\end{proof}

\begin{lem}\label{lem:t1-t}
Given any $\n$, $t\in[0,1]$ is a suitable time iff $1-t$ is.
\end{lem}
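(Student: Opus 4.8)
The plan is to work with the fractional-part characterization of suitable times: $t$ is a suitable time for $\mathbf{n}$ exactly when $\{tn_i\}\in\left[\dfrac{1}{k+1},\dfrac{k}{k+1}\right]$ for every $i\in[k]$. The whole lemma then reduces to a single elementary fact about fractional parts, using crucially that $\mathbf{n}\in(\mathbb{N}^+)^k$ is integral, so that $(1-t)n_i = n_i - tn_i$ differs from $-tn_i$ by the integer $n_i$, whence $\{(1-t)n_i\} = \{-tn_i\}$.

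First I would record the identity $\{-x\} = 1-\{x\}$ valid for every $x\in\R$ with $x\notin\Z$, together with the observation that the closed interval $I:=\left[\dfrac{1}{k+1},\dfrac{k}{k+1}\right]$ is symmetric about $\tfrac12$, i.e. the involution $y\mapsto 1-y$ maps $I$ bijectively onto itself. Next I would note that any point of $I$ is strictly positive (since $\tfrac{1}{k+1}>0$), so if $\{tn_i\}\in I$ then in particular $\{tn_i\}\neq 0$, which is precisely the hypothesis needed to apply $\{-x\}=1-\{x\}$ with $x=tn_i$.

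Putting these together: if $t$ is a suitable time, then for each $i$ we have $\{tn_i\}\in I$, hence $\{tn_i\}\neq 0$, hence $\{(1-t)n_i\}=\{-tn_i\}=1-\{tn_i\}\in 1-I = I$; since this holds for all $i$, $1-t$ is a suitable time. The converse is identical after replacing $t$ by $1-t$ and using $1-(1-t)=t$ (note $1-t\in[0,1]$ as well, so the statement is genuinely symmetric). The only point requiring a word of care is the boundary behaviour at $t\in\{0,1\}$: there $\{tn_i\}=0\notin I$, so $t$ is not suitable, and correspondingly $1-t\in\{1,0\}$ gives $\{(1-t)n_i\}=0\notin I$, so $1-t$ is not suitable either — consistent with the claimed equivalence. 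There is no real obstacle here; the "hard part" is merely being careful that $\{-0\}=0\neq 1$, which is exactly why the strict positivity of the endpoints of $I$ is invoked before applying the reflection identity.
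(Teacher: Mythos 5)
Your argument is correct and takes essentially the same route as the paper: both rest on the integrality of $n_i$ (so that $\{(1-t)n_i\}$ depends only on $-tn_i \bmod 1$) and on the symmetry of the interval $\left[\frac{1}{k+1},\frac{k}{k+1}\right]$ under $y\mapsto 1-y$. The paper writes $n_it=a_i+b_i$ with $a_i=\lfloor n_it\rfloor$ and computes $\{n_i(1-t)\}=\{1-b_i\}$ directly, whereas you invoke the identity $\{-x\}=1-\{x\}$ for $x\notin\Z$; these are the same computation packaged differently, and your version handles the boundary cases $t\in\{0,1\}$ and the nonvanishing of $\{tn_i\}$ a bit more explicitly than the paper does.
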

\begin{proof}
Let $t$ be a suitable time. It follows that $\{n_it\}=b_i$ for some $b_i\in \left[\dfrac{1}{k+1},\dfrac{k}{k+1}\right]$, $i \in [k]$.

Observe that, since $t \leq 1$, for $i\in [k]$, the $i^{th}$ runner will have covered at most $n_i-1$ rounds. Thus, $n_it=a_i+b_i$ where $a_i\in \mathbb{N}$ and $a_i\leq n_i-1$. Now, consider the total distance covered by the $i^{th}$ runner up to time $1-t$.
\begin{gather*}
n_i(1-t)=n_i-n_it=n_i-(a_i+b_i)=(n_i-a_i-1)+(1-b_i)\\
\{n_i(1-t)\}=\{(n_i-a_i-1)+(1-b_i)\}=\{1-b_i\}.
\end{gather*}
Furthermore, for $i\in [k]$, $b_i\in \left[\dfrac{1}{k+1},\dfrac{k}{k+1}\right]$ implies that $1-b_i\in \left[\dfrac{1}{k+1},\dfrac{k}{k+1}\right]$. 

We have, $\{n_i(1-t)\}\in \left[\dfrac{1}{k+1},\dfrac{k}{k+1}\right]$ for all $i\in [k]$. Thus $1-t$ is a suitable time as well.

A similar argument can be made by assuming $1-t$ to be a suitable time, which shows $1-(1-t) = t$ is a suitable time as well. The result follows.
\end{proof}
\begin{mainthm}
$\n$ is a lonely runner instance iff there is a suitable time $t\leq \dfrac{1}{2}$.
\end{mainthm}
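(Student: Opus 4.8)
The plan is to obtain this as an immediate consequence of two facts already available to us: the known characterization that $\mathbf{n}$ is a lonely runner instance if and only if it admits a suitable time $t\le 1$ \cite{bienia1998flows,tao2018some,perarnau2016correlation}, and the reflection symmetry of suitable times recorded in Lemma~\ref{lem:t1-t}, namely that for $t\in[0,1]$ the time $t$ is suitable exactly when $1-t$ is. The only genuine content is to use the symmetry $t\mapsto 1-t$ to fold any suitable time in $[0,1]$ into the sub-interval $[0,\tfrac12]$.

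For the ``if'' direction, suppose $\mathbf{n}$ admits a suitable time $t\le\tfrac12$. Then in particular $t\le 1$, so by the cited characterization $\mathbf{n}$ is a lonely runner instance; nothing further is needed here.

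For the ``only if'' direction, assume $\mathbf{n}$ is a lonely runner instance. By the cited characterization there is a suitable time $t$ with $t\in[0,1]$. If $t\le\tfrac12$ we are done. Otherwise $t>\tfrac12$, so $1-t<\tfrac12$, and since $t\in[0,1]$, Lemma~\ref{lem:t1-t} guarantees that $1-t$ is also a suitable time; thus $1-t$ is a suitable time with $1-t\le\tfrac12$. In either case $\mathbf{n}$ admits a suitable time at most $\tfrac12$, which completes the proof.

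There is no real obstacle in this argument; the one point deserving a word of care is that the invoked existence result must be for the \emph{closed} interval $[0,1]$, so that Lemma~\ref{lem:t1-t} is applicable to the time it supplies. (One could, if desired, make the section self-contained by first re-deriving the bound $t\le 1$ from the $1$-periodicity of the combined motion of the runners, but this is not necessary for the statement.)
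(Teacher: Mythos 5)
Your proof is correct and follows essentially the same approach as the paper: both invoke the known characterization that a lonely runner instance admits a suitable time in $[0,1]$, and then apply the reflection symmetry of Lemma~\ref{lem:t1-t} to fold that time into $[0,\tfrac{1}{2}]$. The only cosmetic difference is in the easy ``if'' direction, where the paper observes directly that the existence of any suitable time makes $\mathbf{n}$ a lonely runner instance by definition, whereas you route it through the $t\le 1$ characterization; both are fine.
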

\begin{proof}
Sufficiency follows from the definition of suitable time. To see the necessity, assume that $\n$ is a \emph{lonely runner instance}. Thus there is a suitable time $s\in[0,1]$. Consider the function
\begin{equation*}
t=
\begin{cases}
s & \text{if }s\leq \dfrac{1}{2}\\
1-s & \text{otherwise}
\end{cases}
\end{equation*}
\Cref{lem:t1-t} yields that both $s$ and $1-s$ are suitable times, implying that at least one of $s$ and $1-s$ is at most $\dfrac{1}{2}$. The result follows.
\end{proof}

\section{A new sufficiency condition}\label{sec:sufficiency}
From a polyhedral theory perspective, the existence of an integer lattice point in $P(\n)$ is the best known sufficiency condition to prove the \emph{Lonely Runner Conjecture}. In this section, we provide a new sufficiency condition to prove the conjecture.
\begin{lem}\label{lem:lattice}
Consider $p\in\mathbb{Z}^+$ and $\n\in \mathbb{Z}^k$. Then, the set of points, $\Lambda_{\n,p}$, defined by
\begin{equation*}
\Lambda_{\n,p}:=\left\{\mathbf{x}\in\mathbb{R}^k:\mathbf{x}=\dfrac{a}{p}\n+\mathbf{b},0\leq a\leq p-1, a\in\mathbb{Z}, \mathbf{b}\in\mathbb{Z}^k\right\}
\end{equation*}
is a lattice.
\end{lem}
\begin{proof}
First, let $a=0$ and $\mathbf{b}=\mathbf{0}$. Then, $\mathbf{x}=\mathbf{0}+\mathbf{0}=\mathbf{0}\in\Lambda_{\n,p}$. Next, consider two points $\mathbf{y}=\dfrac{a_1}{p}\n+\mathbf{b_1}$ and $\mathbf{z}=\dfrac{a_2}{p}\n+\mathbf{b_2}$, where $0\leq a_1,a_2\leq p-1$, $a_1$, $a_2\in \mathbb{Z}$ and $\mathbf{b_1}$, $\mathbf{b_2}\in\mathbb{Z}^k$. Their sum is given by $\mathbf{y}+\mathbf{z}=\left(\dfrac{a_1}{p}\n+\mathbf{b_1}\right)+\left(\dfrac{a_2}{p}\n+\mathbf{b_2}\right)=\dfrac{a_1+a_2}{p}\n+(\mathbf{b_1}+\mathbf{b_2})$. Note that $(a_1+a_2)\in\mathbb{Z}$ and $(\mathbf{b_1}+\mathbf{b_2})\in\mathbb{Z}^k$. If $0\leq a_1+a_2\leq p-1$, then $\mathbf{y}+\mathbf{z}\in\Lambda_{\n,p}$. If not, then $p\leq a_1+a_2\leq 2p-2$. Then, $\mathbf{y}+\mathbf{z}=\dfrac{a_1+a_2-p}{p}\n+(\mathbf{b_1}+\mathbf{b_2}+\n)$, and thus, $\mathbf{y}+\mathbf{z}\in\Lambda_{\n,p}$. Finally, let $\mathbf{u},\mathbf{v}\in\Lambda_{\n,p}$. If $\mathbf{u}\neq\mathbf{v}$, then $\exists~i\in[k]~\text{s.t}~|u_i-v_i|\geq \dfrac{1}{p}$. As a result, $\mathcal{B}\left(\mathbf{v},\dfrac{1}{2p}\right)=\mathbf{v}$. Therefore, $\Lambda_{\n,p}$ is a discrete additive subgroup or, equivalently, a lattice.
\end{proof}

\ignore{\begin{prop}\label{thm:sublattice}
If $p_1, p_2, m\in\mathbb{N}^+$ such that $p_2=mp_1$, then $\Lambda_{\n,p_1}\subseteq \Lambda_{\n,p_2}$.
\end{prop}
\begin{proof}
Let $\mathbf{x}=\dfrac{a}{p_1}\n+\mathbf{b}\in\Lambda_{\n,p_1}$. An equivalent representation of $\mathbf{x}$ is as $\mathbf{x}=\dfrac{am}{mp_1}\n+\mathbf{b}=\dfrac{am}{p_2}\n+\mathbf{b}\in\Lambda_{\n,p_2}$. Thus, we have $\mathbf{x}\in\Lambda_{\n,p_2}$.
\end{proof}}

\begin{lem}\label{lem:sublattice}
If $p_1, p_2, m\in\mathbb{Z}^+$ such that $p_2=mp_1$, then $\Lambda_{\n,p_1}\subseteq \Lambda_{\n,p_2}$.
\end{lem}
\begin{proof}
Consider $\mathbf{x}=\dfrac{a}{p_1}\n+\mathbf{b}\in\Lambda_{\n,p_1}$. Note that $\mathbf{x}=\dfrac{am}{mp_1}\n+\mathbf{b}=\dfrac{am}{p_2}\n+\mathbf{b}\in\Lambda_{\n,p_2}$, where $0\leq am\leq m(p_1-1)\leq mp_1-1=p_2-1$. Thus, we have $\mathbf{x}\in\Lambda_{\n,p_2}$ and the result holds.
\end{proof}

The equation of any line in $\mathbb{R}^n$($n\geq 2$) can be represented using the symmetric form. If $(y_1,\ldots,y_n)\in\mathbb{R}^n$ is a point on the line and the direction ratios of the line are given by $(a_1,\ldots,a_n)\in\mathbb{R}^n$, then the equation (in symmetric form) of the line is
\begin{equation*}
\dfrac{x_1-y_1}{a_1}=\dfrac{x_2-y_2}{a_2}=\ldots=\dfrac{x_n-y_n}{a_n}=t
\end{equation*}
where $t\in\mathbb{R}$. Other than in $\mathbb{R}^2$, there is no known condition to determine whether or not a line passes through an integer point. We now provide one such condition for the case where $\mathbf{a}\in\mathbb{Q}^n$ (or equivalently $\mathbb{Z}^n$) and $\mathbf{y}\in\mathbb{Q}^n$. \ignore{Before getting into the details of the result, it is important to \review{note that the line remains the same independent of which of $\mathbf{a}$ and $m\mathbf{a}$, where $m\in\mathbb{R}$, is used as the direction ratios}. Due to this, $\mathbf{a}\in\mathbb{Q}^n$ can be converted to $\mathbf{a}\in\mathbb{Z}^n$.}

\begin{prop}\label{thm:pimplies1}Let $l$ be a line in $\mathbb{R}^n$, with direction ratios $\mathbf{r}=(r_1,\ldots,r_n)\in\mathbb{Z}^n$. If there exists $p\in\mathbb{N}^+$ such that $l\cap \Lambda_{\mathbf{r},p}\neq \phi$, then $l\cap \Lambda_{\mathbf{r},1}\neq\phi$.
\end{prop}
\begin{proof}
Let $\dfrac{a}{p}\mathbf{r}+\mathbf{b}$ be a point on $l$. Then the equation of this line is given by:
\begin{equation*}
\dfrac{x_1-\left(\dfrac{a}{p}r_1+b_1\right)}{r_1}=\dfrac{x_2-\left(\dfrac{a}{p}r_2+b_2\right)}{r_2}=\ldots=\dfrac{x_n-\left(\dfrac{a}{p}r_n+b_n\right)}{r_n}
\end{equation*}
Equating the terms corresponding to $x_i$ and $x_n$ gives
\begin{equation*}
\dfrac{x_i-\left(\dfrac{a}{p}r_i+b_i\right)}{r_i}=\dfrac{x_n-\left(\dfrac{a}{p}r_n+b_n\right)}{r_n}
\end{equation*}
On simplification, the equations can be written as:
\begin{equation*}
x_i=\dfrac{r_i}{r_n}x_n+\dfrac{r_nb_i-r_ib_n}{r_n}
\end{equation*}
On substituting $x_n=b_n+cr_n, c\in\mathbb{Z}$, we get $x_i\in\mathbb{Z}~~\forall 1\leq i\leq n$. Thus, $l$ contains an integer point.
\end{proof}

\begin{prop}\label{thm:parallel}
Any line in $\mathbb{R}^k$ with direction ratios $\n$ is parallel to each of the facets of $P(\n)$.
\end{prop}
\begin{proof}
Consider any facet of $P(\n)$. The normal vector of this hyperplane is given by $\mathbf{N}=n_j\mathbf{e_i}-n_i\mathbf{e_j}$. Then, $\mathbf{N}\cdot \n=n_i(n_j)+n_j(-n_i)=0$. Since the dot product is zero, the normal vector of the hyperplane and the direction vector of the line are parallel to each other. The result follows since the above holds for each of the facets of $P(\n)$.
\end{proof}

\begin{rem}\label{rem:line}
Let $l$ be a line in $\mathbb{R}^k$, with direction ratios $\n$. Then, due to \Cref{thm:parallel}, $l\subseteq P(\n)$ or $l\cap P(\n)=\emptyset$.
\end{rem}
\setcounter{mainthm}{9}
\begin{mainthm}\label{cor:sufficiency}
If $P(\n)\cap\left(\bigcup\limits_{p\geq 1}\Lambda_{\n,p}\right)\neq\emptyset$, then $P(\n)\cap\Lambda_{\n,1}\neq\emptyset$.
\end{mainthm}
\begin{proof}
Consider any line with direction ratios $\n$. Assume that $P(\n)\cap\left(\bigcup\limits_{p\geq 1}\Lambda_{\n,p}\right)\neq\emptyset$. Due to \Cref{thm:pimplies1} and Remark \ref{rem:line}, we have $P(\n)\cap\Lambda_{\n,1}\neq\emptyset$.
\end{proof}

\Cref{cor:sufficiency} serves as a new sufficiency condition that may be used to prove the conjecture. In order to understand the geometry of a lattice, it helps to have knowledge of the basis of the lattice. To this end, we attempt to identify a basis for $\Lambda_{\n,p}$ in the following.

\ignore{As a result, we can expect that for a \emph{good} basis of the lattice, the corresponding fundamental parallelepiped is contained in $\left[\frac{1}{k+1},\frac{k}{k+1}\right]^k$. Consequently, it can be shown that some translate of this fundamental parallelepiped is contained in $P(\n)$.}

\begin{prop}\label{thm:basis}
Let $p$ be such that $GCD(n_i,p)=1$ for some $i\in[k]$. Then, 
$$
B=\left\{\dfrac{m}{p}\mathbf{n}-\mathbf{d},\mathbf{e}_1,\ldots,\mathbf{e}_{i-1},\mathbf{e}_{i+1},\ldots,\mathbf{e}_k\right\}
$$

is a basis of $\Lambda_{\mathbf{n},p}$ where $m=n_i^{-1}~\textit{{\normalfont mod}}~p$ and $\mathbf{d}=\left(\Bigg\lfloor\dfrac{mn_1}{p}\Bigg\rfloor,\ldots,\Bigg\lfloor\dfrac{mn_k}{p}\Bigg\rfloor\right)$.
\end{prop}
\begin{proof}
WLOG assume that $GCD(n_k,p)=1$. From this, we get that $mn_k=1+cp$ where $c\in\mathbb{Z}$. Then, $d_k=c$ and we get,
\begin{equation}
\dfrac{mn_k}{p}-d_k=\dfrac{1}{p}\label{eq:1mod}
\end{equation}
Next, note that $0\leq m<p$. By letting $a=m$ and $\mathbf{b}=\mathbf{d}$ in the definition of $\Lambda_{\mathbf{n},p}$, we get that $\left(\dfrac{m}{p}\mathbf{n}-\mathbf{d}\right)$ is a lattice point. Moreover, it is easy to check that $\mathbf{e}_1,\ldots,\mathbf{e}_{k-1}$ are lattice points.

Now consider an arbitrary lattice point. Let it be $\left(\dfrac{a}{p}\mathbf{n}+\mathbf{b}\right)$. Assume that
\begin{equation}
\dfrac{a}{p}\mathbf{n}+\mathbf{b}=c_1\left(\dfrac{m}{p}\mathbf{n}-\mathbf{d}\right)+c_2\mathbf{e}_1+\ldots+c_k\mathbf{e}_{k-1}\label{eq:thm25-2}
\end{equation}
where $c_1,\ldots,c_k$ are real numbers. To prove the result, it suffices to show that these $c_i$s are, in fact, all integers.

Equating the $k$th coordinates of the left and right hand side of \eqref{eq:thm25-2}, we get:
\begin{equation*}
\dfrac{an_k}{p}+b_k=c_1\left(\dfrac{mn_k}{p}-d_k\right)
\end{equation*}
On simplification, we then get:
\begin{equation*}
c_1 = \dfrac{\frac{an_k}{p}+b_k}{\frac{mn_k}{p}-d_k} = an_k+pb_k
\end{equation*}
where the second equality follows from \eqref{eq:1mod}. It is easy to see that $c_1$ is an integer. Next, we equate the $i$th coordinate of the left and right hand sides of \eqref{eq:thm25-2} for $i<k$. That gives:
\begin{equation*}
\dfrac{an_i}{p}+b_i=\dfrac{mc_1n_i}{p}-c_1d_i+c_{i+1}
\end{equation*}
Substitute the expression obtained for $c_1$ and simplify. We then get:
\begin{equation*}
\dfrac{an_i}{p}+b_i=\dfrac{m(an_k+pb_k)n_i}{p}-(an_k+pb_k)d_i+c_{i+1}=\dfrac{amn_in_k}{p}+mb_kn_i-(an_k+pb_k)d_i+c_{i+1}
\end{equation*}
Writing the equation in terms of $c_{i+1}$ gives:
\begin{align*}
c_{i+1} & = \dfrac{an_i(1-mn_k)}{p}+b_i-mb_kn_i+(an_k+pb_k)d_i\\
& = \dfrac{an_i(-pd_k)}{p}+b_i-mb_kn_i+(an_k+pb_k)d_i\\
& = -an_id_k+b_i-mb_kn_i+(an_k+pb_k)d_i
\end{align*}
where the second equality follows from \eqref{eq:1mod}. As before, $c_{i+1}$ is an integer. Therefore, the result holds.
\end{proof}

Utilizing the basis in \Cref{thm:basis}, the volume of the corresponding fundamental parallelepiped can be computed to be $\dfrac{1}{p}$. Since this volume is decreasing in $p$, one can expect that for a \emph{good} basis of the lattice, the corresponding fundamental parallelepiped is contained in $\left[\frac{1}{k+1},\frac{k}{k+1}\right]^k$. Consequently, it can be shown that some translate of this fundamental parallelepiped is contained in $P(\n)$.

\section{Conclusion}
In this work, we revisited the \emph{Lonely Runner Conjecture}, primarily through a polyhedral perspective. In addition to identifying new families of speed vectors that can be characterized as \emph{lonely runner instances}, we propose a new sufficiency condition for a set of speeds to satisfy the conjecture.\\

There are several ways in which the conjecture has been or is being approached. However, more often than not, we have to deal with infinite sample spaces. We have tried to condense the sample space to a finite set through Conjecture \ref{conj:1}. We hope that this may provide promising new approaches to resolve the long standing conjecture.
\bibliographystyle{plainnat}
\bibliography{references.bib}
\end{document}